\documentclass[11pt,a4paper]{amsart}
\usepackage{amsmath,amssymb,amsthm,mathtools}
\usepackage[margin=2.6cm]{geometry}
\usepackage{booktabs}
\usepackage[labelsep=period]{caption}
\usepackage[colorlinks=true,linkcolor=blue,citecolor=blue,urlcolor=blue]{hyperref}
\theoremstyle{plain}
\newtheorem{theorem}{Theorem}[section]
\newtheorem{proposition}[theorem]{Proposition}
\newtheorem{lemma}[theorem]{Lemma}
\newtheorem{corollary}[theorem]{Corollary}
\newtheorem{conjecture}[theorem]{Conjecture}
\newtheorem{question}[theorem]{Question}

\NewDocumentEnvironment{mainstatement}{m m m +b}{\expandafter\long\expandafter\gdef\csname mainstatement@#2\endcsname{#4}\begin{#1}[#3~\ref{body:#2}]\label{#2}#4\end{#1}}{}
\newif\ifrestating
\newcommand{\mainref}[1]{\ifrestating\ref{body:#1}\else\ref{#1}\fi}
\newcommand{\restate}[2]{\begin{#1}\restatingtrue\label{body:#2}\csname mainstatement@#2\endcsname\end{#1}}
\theoremstyle{definition}
\newtheorem{remark}[theorem]{Remark}
\numberwithin{equation}{section}

\newcommand{\Z}{\mathbb{Z}}
\newcommand{\Q}{\mathbb{Q}}
\newcommand{\C}{\mathbb{C}}
\newcommand{\RR}{\mathbb{R}}
\newcommand{\HH}{\mathbb{H}}
\newcommand{\PP}{\mathbb{P}}
\newcommand{\Rq}{\mathcal{R}}
\newcommand{\Sq}{\mathcal{S}}
\newcommand{\PSL}{\mathrm{PSL}}
\newcommand{\PGL}{\mathrm{PGL}}
\newcommand{\SL}{\mathrm{SL}}
\newcommand{\GL}{\mathrm{GL}}
\newcommand{\hj}[1]{[\![#1]\!]}
\newcommand{\ncl}[1]{\langle\!\langle #1\rangle\!\rangle}
\newcommand{\trp}{^{\mathsf{T}}}
\newcommand{\sm}[4]{\left(\begin{smallmatrix}#1&#2\\#3&#4\end{smallmatrix}\right)}
\DeclareMathOperator{\tr}{tr}

\title[Denominators of $q$-deformed rational numbers]{Cyclotomic factors and irreducibility\\ of the denominators of $q$-deformed rational numbers}
\author{Kengo Miyamoto}
\address{Department of Computer and Information Sciences, Ibaraki University, Ibaraki, 316-8511, Japan.}
\email{kengo.miyamoto.uz63@vc.ibaraki.ac.jp}
\date{}
\subjclass[2020]{Primary 11F06, secondary 05A30, 11A55, 11R09, 20H10}
\keywords{$q$-deformed rational numbers, modular group, triangle groups, cyclotomic polynomials, irreducible polynomials}

\begin{document}

\begin{abstract}
For $d\ge2$ the $q$-deformed modular group specialized at a primitive $d$-th root of unity is the triangle group of type $(2,3,d)$.
Using this we determine the fractions $r/s$ for which the $d$-th cyclotomic polynomial divides the denominator $\Sq_{r/s}(q)$ of the $q$-deformed rational number $[r/s]_q$.
They form the orbit of $\infty$ under the normal closure of $\sm1d01$ in $\PSL(2,\Z)$.
This proves a conjecture of Byakuno, Ren and Yanagawa, and shows that the congruences $s\equiv0$ and $r\equiv\pm1$ modulo $d$ characterize the divisibility exactly for $d\le5$.
For $a\in\{2,3,4,6\}$ and $n>5a^2$ prime to $a$ we show that $\Sq_{a/n}(q)$ is irreducible up to cyclotomic factors.
Together with a computer check for small primes, this confirms a conjecture of Kogiso, Ren, Wakui, Yanagawa and the author for every prime $p$ and every $r$ prime to $p$ with $r\equiv\pm a$ or $ar\equiv\pm1\pmod p$.
\end{abstract}

\maketitle

\section{Introduction}\label{sec:intro}

The $q$-integers $1+q+\dots+q^{n-1}$ are the most basic $q$-analogues of integers.
Morier-Genoud and Ovsienko extended them to all rational numbers by replacing the Pascal triangle in the recursion of the Gaussian binomial coefficients with the Farey graph, and they related the resulting polynomials to quiver representations and to the Jones polynomials of rational knots \cite{MO20}.
Before that, Lee and Schiffler had expressed the Jones polynomials of rational links through cluster algebras and continued fractions \cite{LS19}, see also \cite{CS18}, and Kogiso and Wakui had related them to Conway--Coxeter friezes \cite{KW19}.
The $q$-deformed rational numbers are compatible with the action of the modular group \cite{LM21}, and they appear as boundary points of a compactified space of stability conditions on the $2$-Calabi--Yau category of type $A_2$ \cite{BBL23}.
They also appear in the study of Markov numbers \cite{LLS23}, of the Burau representation of the braid group on three strands \cite{MOV24} and of character varieties \cite{JPT26}.

The denominators of $q$-deformed rational numbers are polynomials in $q$ with integer coefficients, and it is natural to ask how they factor over $\Q$.
Evans, Veselov and Winn studied the $q$-deformed rational numbers whose denominators are products of cyclotomic polynomials \cite{EVW26}.
In this paper we determine which cyclotomic polynomials divide the denominators, by identifying the specialization of the $q$-deformed modular group at a root of unity with a triangle group, and we show that for several infinite families the denominators are irreducible up to these cyclotomic factors.

For an irreducible fraction $r/s$ with $s>0$ the $q$-deformed rational number is a rational function $[r/s]_q=\Rq_{r/s}(q)/\Sq_{r/s}(q)$, where $\Rq_{r/s}(q)$ and $\Sq_{r/s}(q)$ are coprime Laurent polynomials with integer coefficients, normalized by $\Sq_{r/s}(q)\in\Z[q]$ and $\Sq_{r/s}(0)=1$ \cite{MO20}, \cite[Section 2]{BRY26}.
We call them the \emph{numerator} and the \emph{denominator} of $[r/s]_q$.
They satisfy $\Rq_{r/s}(1)=r$ and $\Sq_{r/s}(1)=s$, and for $r/s>1$ they are polynomials with nonnegative integer coefficients.
The definition is recalled in Section~\ref{sec:prelim}.
The $q$-deformation is compatible with the action of the modular group $\Gamma=\PSL(2,\Z)$ on $\PP^1(\Q)=\Q\cup\{\infty\}$ through a $q$-deformation of $\Gamma$ \cite{MO20}, \cite{LM21}, which coincides, after projectivization, with the Burau representation of the braid group on three strands up to the sign of the parameter \cite{MOV24}.
Divisibility of Laurent polynomials is understood in $\Z[q^{\pm1}]$.
For $n\ge1$ let $[n]_q=1+q+\dots+q^{n-1}$ be the $q$-integer.
For $d\ge1$ let $\Phi_d(q)$ be the $d$-th cyclotomic polynomial and $\zeta_d$ a primitive $d$-th root of unity.
Recall that $[n]_q=\prod_{1<d\mid n}\Phi_d(q)$.
For a polynomial $f$ with $f(0)\neq0$ we put $f^\vee(q)=q^{\deg f}f(q^{-1})$.

Morier-Genoud and Ovsienko showed that $\Sq_{r/s}(-1)=0$ if and only if $s$ is even \cite[Proposition 1.8]{MO20}.
The analogous statements for the primitive third and fourth roots of unity were obtained in \cite[Corollary 7.2 and Theorem 7.6]{KMRWY25}, and Byakuno, Ren and Yanagawa showed that $\Sq_{r/s}(\zeta_5)=0$ if and only if $5\mid s$ and $r\equiv\pm1\pmod5$ \cite[Corollary 1.2 (1)]{BRY26}.
Thus for $d\le5$ the polynomial $\Phi_d(q)$ divides $\Sq_{r/s}(q)$ if and only if $r$ and $s$ satisfy a congruence condition modulo $d$.
Byakuno, Ren and Yanagawa proposed the following conjecture and proved it for prime $n$ and for $n=4,6,8,9,10,15,25$ \cite[Proposition 5.7]{BRY26}.

\begin{conjecture}[{\cite[Conjecture 1.4]{BRY26}}]\label{conj:BRY}
Let $n\ge2$.
If $\Phi_n(q)$ divides $\Sq_{r/s}(q)$, then $[n]_q$ divides $\Sq_{r/s}(q)$.
In particular, $\Sq_{r/s}(\zeta_n)=0$ implies $s\in n\Z$.
\end{conjecture}

Our first result describes the divisibility by $\Phi_d(q)$ for every $d\ge2$ in terms of the modular group.
Let $R$ and $S$ be the images of $\sm1101$ and $\sm0{-1}10$ in $\Gamma$.
Every irreducible fraction is of the form $\gamma(\infty)$ with $\gamma\in\Gamma$, and the stabilizer of $\infty$ in $\Gamma$ is $\langle R\rangle$.
For $d\ge2$ let $N_d$ be the normal closure of $R^d$ in $\Gamma$.
Since $N_d$ is normal, $\langle R\rangle N_d$ is a subgroup of $\Gamma$.

\begin{mainstatement}{mainthm}{thm:A}{Theorem}
Let $d\ge2$, let $r/s$ be an irreducible fraction with $s>0$, and let $\gamma\in\Gamma$ satisfy $\gamma(\infty)=r/s$.
\begin{enumerate}
\item $\Phi_d(q)$ divides $\Sq_{r/s}(q)$ if and only if $\gamma\in\langle R\rangle N_d$.
\item $\Phi_d(q)$ divides $\Rq_{r/s}(q)$ if and only if $\gamma\in S\langle R\rangle N_d$.
\end{enumerate}
\end{mainstatement}

In other words, $\Phi_d(q)$ divides $\Sq_{r/s}(q)$ if and only if $r/s$ lies in the orbit of $\infty$ under $N_d$.
The proof rests on the following description of the specialization at $q=\zeta_d$ of the $q$-deformed modular group (Theorem~\ref{thm:kernel}).
Substituting $q=\zeta_d$ in the matrices $R_q=\sm q101$ and $S_q=\sm0{-q^{-1}}10$ defines a homomorphism $\rho_{\zeta_d}$ from $\Gamma$ to $\PGL(2,\C)$.
Its kernel is $N_d$, and the stabilizer of $\infty$ in its image is generated by the image of $R$.
Thus the image of $\rho_{\zeta_d}$ is isomorphic to the triangle group of type $(2,3,d)$, which is finite for $d\le5$, a crystallographic group of the Euclidean plane for $d=6$ and a Fuchsian group for $d\ge7$.
The finiteness of the image for $d\le5$ and its infiniteness for $d\ge6$ were proved by Byakuno, Ren and Yanagawa \cite[Theorem 1.1 and Corollary 3.4]{BRY26}, see also \cite{FK14}, \cite{Lec24}, \cite{LLS23} and \cite{TU25}.
Through the correspondence with the Burau representation \cite{MOV24}, the equality $\ker\rho_{\zeta_d}=N_d$ essentially follows from the description of the image of the Burau representation of the braid group on three strands at roots of unity by Funar and Kohno \cite{FK14}, which was corrected by Dlugie \cite{Dlu24} when $3\mid d$.
We give a short proof which, unlike the proof of \cite[Corollary 1.2 (1)]{BRY26}, does not rely on computer calculations and which also determines the stabilizer of $\infty$.
For $d\ge7$ it compares the traces of the two generators and of their product (Lemma~\ref{lem:trace}) with those of a Fuchsian realization of the triangle group.

Theorem~\ref{thm:A} implies Conjecture~\ref{conj:BRY} in general, together with the congruence conditions observed for $d\le5$.

\begin{mainstatement}{maincor}{cor:B}{Corollary}
Let $n\ge2$ and let $r/s$ be an irreducible fraction with $s>0$.
If $\Phi_n(q)$ divides $\Sq_{r/s}(q)$, then $[n]_q$ divides $\Sq_{r/s}(q)$, and moreover $s\equiv0$ and $r\equiv\pm1\pmod n$.
If $\Phi_n(q)$ divides $\Rq_{r/s}(q)$, then $[n]_q$ divides $\Rq_{r/s}(q)$, and moreover $r\equiv0$ and $s\equiv\pm1\pmod n$.
\end{mainstatement}

The congruence condition of Corollary~\ref{cor:B} is sufficient exactly for $d\le5$, and for $d\ge6$ the divisibility is not determined by the classes $\pm(r,s)$ modulo any positive integer (Remark~\ref{rem:noncongruence}).

\begin{mainstatement}{maincor}{cor:C}{Corollary}
Let $d\ge2$.
\begin{enumerate}
\item If $d\le5$, then for every irreducible fraction $r/s$ with $s>0$, $\Phi_d(q)$ divides $\Sq_{r/s}(q)$ if and only if $s\equiv0$ and $r\equiv\pm1\pmod d$.
If moreover $d\le4$, this is equivalent to $d\mid s$.
\item If $d\ge6$, then $\langle R\rangle N_d$ has infinite index in $\Gamma$, and the congruence condition of \textup{Corollary~\mainref{cor:B}} is not sufficient for $\Phi_d(q)\mid\Sq_{r/s}(q)$.
\end{enumerate}
The same statements hold for $\Rq_{r/s}(q)$ with the roles of $r$ and $s$ interchanged.
\end{mainstatement}

For $d=6$ the image of $\rho_{\zeta_6}$ acts by affine transformations in a suitable coordinate, and this yields an explicit formula for the values at a primitive sixth root of unity.
For an irreducible fraction $r/s>1$ we write $r/s=\hj{c_1,\dots,c_k}$ for its Hirzebruch--Jung continued fraction expansion, which is recalled in Section~\ref{sec:prelim}.

\begin{mainstatement}{mainthm}{thm:D}{Theorem}
Let $\zeta$ be a primitive sixth root of unity.
Let $r/s>1$ be an irreducible fraction with Hirzebruch--Jung continued fraction $r/s=\hj{c_1,\dots,c_k}$, and put $C_0=0$ and $C_i=c_1+\dots+c_i$ for $1\le i\le k$.
Then
\[
\Rq_{r/s}(\zeta)=\zeta^{C_k-2k}\sum_{i=0}^{k}(-1)^i\zeta^{-C_i}
\quad\text{and}\quad
\Sq_{r/s}(\zeta)=\zeta^{C_k-2k-1}\sum_{i=1}^{k}(-1)^i\zeta^{-C_i}.
\]
\end{mainstatement}

The second theme of this paper is the irreducibility of the denominators.
Kogiso, Ren, Wakui, Yanagawa and the author proposed the following conjecture and verified it for $p\le739$ \cite{KMRWY25}.

\begin{conjecture}[{\cite[Conjecture 7.9]{KMRWY25}}]\label{conj:KMRWY}
Let $p$ be a prime and let $a$ be an integer prime to $p$.
Then $\Sq_{a/p}(q)$ is irreducible over $\Q$.
\end{conjecture}

For composite denominators cyclotomic factors occur, and Corollary~\ref{cor:B} restricts them.
For $a\ge2$ put
\[
D_a=\{d\ge2\mid d\text{ divides }a-1\text{ or }a+1\}.
\]
Thus $D_2=\{3\}$, $D_3=\{2,4\}$, $D_4=\{3,5\}$ and $D_6=\{5,7\}$.
Let $n$ be prime to $a$.
By Corollary~\ref{cor:B}, a cyclotomic polynomial $\Phi_d(q)$ with $d\ge2$ divides $\Sq_{a/n}(q)$ only if $d\in D_a$ and $d\mid n$, and $\Phi_1$ does not divide $\Sq_{a/n}(q)$ since $\Sq_{a/n}(1)=n$.

\begin{mainstatement}{mainthm}{thm:E}{Theorem}
Let $a\in\{2,3,4,6\}$, and let $n>5a^2$ be an integer prime to $a$.
Put $C_{a,n}=\prod_{d\in D_a,\,d\mid n}\Phi_d$.
Then
\[
\Sq_{a/n}(q)=C_{a,n}(q)\,E_{a,n}(q),
\]
where $E_{a,n}(q)\in\Z[q]$ is irreducible over $\Q$ and no root of $E_{a,n}(q)$ is a root of unity.
Moreover $E_{a,n}(1)=n/C_{a,n}(1)\ge2$.
\end{mainstatement}

\begin{mainstatement}{maincor}{cor:F}{Corollary}
Let $a\in\{2,3,4,6\}$, let $n>5a^2$ be prime to $a$, and let $r$ be an integer with $r\equiv\pm a$ or $ar\equiv\pm1\pmod n$.
Then $\Sq_{r/n}(q)$ is the product of $C_{a,n}(q)$ and a polynomial which is irreducible over $\Q$, has no root of unity as a root, and takes the value $n/C_{a,n}(1)\ge2$ at $q=1$.
In particular \textup{Conjecture~\ref{conj:KMRWY}} holds for every $b\in\{2,3,4,6\}$, every prime $p>5b^2$ and every $r$ with $r\equiv\pm b$ or $br\equiv\pm1\pmod p$.
\end{mainstatement}

Together with the computer check for $p\le739$ in \cite{KMRWY25}, Corollary~\ref{cor:F} confirms Conjecture~\ref{conj:KMRWY} for every prime $p$ and every $r$ prime to $p$ with $r\equiv\pm b$ or $br\equiv\pm1\pmod p$ for some $b\in\{2,3,4,6\}$ (Remark~\ref{rem:primes}).

The starting point of the proof of Theorem~\ref{thm:E} is the identity $(q-1)\Rq_{n/a}(q)=q^NJ(q)-[a]_q$ for $n\equiv\pm1\pmod a$, where $N=\lfloor n/a\rfloor$ and $J$ is a polynomial with $a$ nonzero coefficients, all equal to $1$ (Lemma~\ref{lem:identity}).
Together with $\Sq_{a/n}=\Rq_{n/a}^\vee$, this expresses $\Sq_{a/n}$ through a lacunary polynomial which, for $N\ge a$, has $2a$ nonzero coefficients $\pm1$ and a large gap.
The cyclotomic factors of this polynomial are determined in Lemma~\ref{lem:cyclotomic}, and its irreducibility up to these factors is proved by the method of Ljunggren \cite{Lju60}, in the form developed by Sawin, Shusterman and Stoll \cite{SSS}.
For $a=2$ the polynomial $q^{N+2}+q^N-q-1$ is a quadrinomial, and the factorization of quadrinomials $q^i\pm q^j\pm q^k\pm1$ was determined by Ljunggren \cite{Lju60} and Mills \cite{Mil85}.
The restriction $a\in\{2,3,4,6\}$ enters in two places.
For these $a$ every $n$ prime to $a$ satisfies $n\equiv\pm1\pmod a$, which is needed for Lemma~\ref{lem:identity}, and the combinatorial Lemma~\ref{lem:divisors} on the divisors of $[a]_qJ^\vee$ is proved only for these $a$.
The bound $n>5a^2$ comes from a counting argument (Lemma~\ref{lem:blocks}).

The paper is organized as follows.
Section~\ref{sec:prelim} recalls the definitions.
Section~\ref{sec:cyclotomic} treats the cyclotomic factors.
It determines the specialization of the $q$-deformed modular group at roots of unity and proves Theorem~\ref{thm:A}, Corollaries~\ref{cor:B} and~\ref{cor:C} and Theorem~\ref{thm:D}.
Section~\ref{sec:irreducibility} proves Theorem~\ref{thm:E} and Corollary~\ref{cor:F} and ends with further questions.

\section{Preliminaries}\label{sec:prelim}

Every irreducible fraction $r/s>1$ has a unique Hirzebruch--Jung continued fraction expansion
\[
\frac rs=\hj{c_1,\dots,c_k}=c_1-\cfrac{1}{c_2-\cfrac{1}{\ddots-\cfrac{1}{c_k}}}\quad(c_i\ge2).
\]
Put
\[
R_q=\begin{pmatrix}q&1\\0&1\end{pmatrix},\quad S_q=\begin{pmatrix}0&-q^{-1}\\1&0\end{pmatrix},\quad M_q(c_1,\dots,c_k)=R_q^{c_1}S_q\cdots R_q^{c_k}S_q\in\GL(2,\Z[q^{\pm1}]).
\]
For $r/s=\hj{c_1,\dots,c_k}>1$ the polynomials $\Rq_{r/s}(q)$ and $\Sq_{r/s}(q)$ are defined as the entries of the first column of $M_q(c_1,\dots,c_k)$, and $[r/s]_q=\Rq_{r/s}(q)/\Sq_{r/s}(q)$ \cite{MO20}, \cite[Definition 2.2]{BRY26}.
They belong to $\Z_{\ge0}[q]$ and satisfy $\Rq_{r/s}(1)=r$, $\Sq_{r/s}(1)=s$ and $\Rq_{r/s}(0)=\Sq_{r/s}(0)=1$.
The $q$-deformation is extended to $\PP^1(\Q)$ by
\begin{equation}\label{eq:shift}
[\alpha+1]_q=q[\alpha]_q+1,\quad [0/1]_q=0,\quad [1/0]_q=1/0,
\end{equation}
and for an arbitrary irreducible fraction $r/s$ with $s>0$ the polynomials $\Rq_{r/s}(q)$ and $\Sq_{r/s}(q)$ are defined as the coprime Laurent polynomials with integer coefficients such that $[r/s]_q=\Rq_{r/s}(q)/\Sq_{r/s}(q)$, $\Sq_{r/s}(q)\in\Z[q]$ and $\Sq_{r/s}(0)=1$ \cite[Section 2]{BRY26}.
It follows from \eqref{eq:shift} that $\Sq_{r/s+n}=\Sq_{r/s}$ for $n\in\Z$.
If $f\in\Z[q]$ is monic with $f(0)\neq0$ and $g\in\Z[q]$, then $f$ divides $g$ in $\Z[q^{\pm1}]$ if and only if $f$ divides $g$ in $\Z[q]$, and this holds if and only if $f$ divides $g$ in $\Q[q]$.

We keep the notation $\Gamma$, $R$, $S$ and $N_d=\ncl{R^d}$ of the introduction.
If $\gamma=\pm\sm abce$, then $\gamma(\infty)=a/c$.
Since the stabilizer of $\infty$ in $\Gamma$ is $\langle R\rangle$, we have $\gamma'(\infty)=\gamma(\infty)$ if and only if $\gamma'\in\gamma\langle R\rangle$.
For $r/s=\hj{c_1,\dots,c_k}>1$ the element $R^{c_1}S\cdots R^{c_k}S$ maps $\infty$ to $r/s$ \cite[(2.1)]{BRY26}.
The group $\Gamma$ has the presentation $\langle S,R\mid S^2,(SR)^3\rangle$ \cite{Ser80}.
For $N\ge1$ let $\Gamma(N)$ be the kernel of the reduction $\Gamma\to\PSL(2,\Z/N\Z)$, where $\PSL(2,\Z/N\Z)=\SL(2,\Z/N\Z)/\{\pm1\}$.

Let $I$ be the identity matrix.
Then $S_q^2=-q^{-1}I$.
The matrix $S_qR_q=\sm0{-q^{-1}}q1$ has trace $1$ and determinant $1$, and the Cayley--Hamilton theorem gives $(S_qR_q)^3=-I$.
Therefore $R\mapsto R_q$ and $S\mapsto S_q$ define a homomorphism $\rho$ from $\Gamma$ to $\PGL(2,\Q(q))$.
For $\zeta\in\C^\times$ they also define a homomorphism
\[
\rho_\zeta\colon\Gamma\to\PGL(2,\C),\quad R\mapsto R_\zeta,\ S\mapsto S_\zeta,
\]
where $R_\zeta$ and $S_\zeta$ are the specializations at $q=\zeta$.
For a word $w$ in $R^{\pm1}$ and $S^{\pm1}$ we write $M_w(q)$ for the corresponding product of $R_q^{\pm1}$ and $S_q^{\pm1}$.
Its determinant is a power of $q$, and $\rho_\zeta$ maps the element of $\Gamma$ represented by $w$ to the class of $M_w(\zeta)$.
The group $\PGL(2,\C)$ acts on $\PP^1(\C)=\C\cup\{\infty\}$ by M\"obius transformations.

\begin{lemma}\label{lem:column}
Let $r/s$ be an irreducible fraction with $s>0$, let $\gamma\in\Gamma$ satisfy $\gamma(\infty)=r/s$, and let $w$ be a word representing $\gamma$.
Then the first column of $M_w(q)$ equals $u\,(\Rq_{r/s}(q),\Sq_{r/s}(q))\trp$ for some $u\in\{\pm q^m\mid m\in\Z\}$.
\end{lemma}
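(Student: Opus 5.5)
Since $M_w(q)$ depends on the word $w$, the plan is to first show the first column does not depend on $w$ (up to a unit), then compute it for the word coming from the Hirzebruch--Jung expansion, and finally use translation invariance to treat fractions $r/s\le1$.

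\emph{Independence of the word.} If $w$ and $w'$ represent elements with $\gamma'(\infty)=\gamma(\infty)$, then $\gamma'=\gamma R^m$ for some $m$. The relations $S^2$ and $(SR)^3$ hold in $\GL(2,\Z[q^{\pm1}])$ up to the scalars $-q^{-1}$ and $-1$, and free reductions such as $RR^{-1}$ hold exactly. Hence two words representing the same element of $\Gamma$ have matrices differing by a scalar in $\{\pm q^m\}$. Moreover $R_q^m$ has first column $(q^m,0)\trp$, so $M_wR_q^m$ has first column $q^m$ times the first column of $M_w$. Therefore the first column of $M_w(q)$ is determined up to a factor in $\{\pm q^m\}$ by $\gamma(\infty)$ alone. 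It suffices to check the claim for one convenient $\gamma$ per fraction.

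\emph{Fractions greater than one.} For $r/s=\hj{c_1,\dots,c_k}>1$ take $w=R^{c_1}S\cdots R^{c_k}S$. By \cite[(2.1)]{BRY26} this $w$ maps $\infty$ to $r/s$. By the definition in Section~\ref{sec:prelim}, the first column of $M_q(c_1,\dots,c_k)$ is exactly $(\Rq_{r/s},\Sq_{r/s})\trp$, so $u=1$.

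\emph{General fractions.} Pick $n$ with $r/s+n>1$; note $R^{-n}$ maps $r/s+n$ to $r/s$. Take the word $R^{-n}w_0$, where $w_0$ is the Hirzebruch--Jung word for $r/s+n$. Its first column is $R_q^{-n}(\Rq_{r/s+n},\Sq_{r/s+n})\trp$. Iterating \eqref{eq:shift} shows that $R_q$ acts on columns compatibly with $[\alpha]_q\mapsto q[\alpha]_q+1$. So this column $(A,B)\trp$ satisfies $A/B=[r/s]_q$, with $B=\Sq_{r/s+n}$ and $A=q^{-n}(\Rq_{r/s+n}-[n]_q\Sq_{r/s+n})$, where $[n]_q=(q^n-1)/(q-1)$ is read appropriately for negative $n$.

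The main obstacle is to check that this column is exactly the normalized pair, i.e.\ that $A$ and $B$ are coprime Laurent polynomials with $B\in\Z[q]$ and $B(0)=1$. The conditions on $B$ hold since $B=\Sq_{r/s+n}$ is a polynomial with constant term $1$. For coprimality, note that $\det M_{R^{-n}w_0}$ is a unit $\pm q^m$, so $A$ and $B$ generate the unit ideal in $\Z[q^{\pm1}]$. The normalization then forces $(A,B)=(\Rq_{r/s},\Sq_{r/s})$, up to the unit ambiguity already accounted for in the first step.
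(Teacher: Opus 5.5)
Your proof is correct and follows the paper's argument. You reduce to one word per fraction (a scalar ambiguity $\pm q^m$, plus right multiplication by powers of $R_q$), take $R^{-n}$ times the Hirzebruch--Jung word of $r/s+n$, and identify its first column with the normalized pair via the shift rule $[\alpha+1]_q=q[\alpha]_q+1$, coprimality from the unit determinant, and uniqueness of coprime representatives in $\Z[q^{\pm1}]$. The only difference is cosmetic: you get word-independence from the relators mapping to central scalars $\pm q^mI$, where the paper uses well-definedness of $\rho$ in $\PGL(2,\Q(q))$ and compares determinants, and your separate case $r/s>1$ is just $n=0$.
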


\begin{proof}
Let $w'$ be another word representing $\gamma$.
Since $\rho$ is well defined, we have $M_{w'}(q)=c\,M_w(q)$ for some $c\in\Q(q)^\times$.
Comparing determinants yields $c^2=q^j$ for some $j\in\Z$.
It follows that $j$ is even and $c=\pm q^{j/2}$, whence the first column of $M_{w'}(q)$ is a unit multiple of that of $M_w(q)$.
Next let $\gamma'\in\Gamma$ satisfy $\gamma'(\infty)=\gamma(\infty)$.
Then $\gamma'=\gamma R^l$ for some $l\in\Z$, and $R_q^l$ maps $(1,0)\trp$ to $(q^l,0)\trp$.
Therefore it suffices to prove the lemma for one element $\gamma$ and one word $w$ representing it.
Choose $n\ge0$ with $r/s+n>1$, write $r/s+n=\hj{c_1,\dots,c_k}$, and put $w=R^{-n}R^{c_1}S\cdots R^{c_k}S$.
Let $(A,C)\trp$ be the first column of $M_w(q)=R_q^{-n}M_q(c_1,\dots,c_k)$.
By the definition of $[r/s+n]_q$ and by \eqref{eq:shift}, we have $A/C=[r/s]_q=\Rq_{r/s}/\Sq_{r/s}$.
Since $\det M_w(q)$ is a unit of $\Z[q^{\pm1}]$, the Laurent polynomials $A$ and $C$ are coprime.
The ring $\Z[q^{\pm1}]$ is a unique factorization domain whose units are $\pm q^m$, and $\Rq_{r/s}$ and $\Sq_{r/s}$ are coprime as well.
Therefore $(A,C)=u\,(\Rq_{r/s},\Sq_{r/s})$ for a unit $u$.
\end{proof}

\begin{lemma}\label{lem:zero}
Let $d\ge2$ and let $\zeta$ be a primitive $d$-th root of unity.
Let $r/s$ and $\gamma$ be as in \textup{Lemma~\ref{lem:column}}.
\begin{enumerate}
\item $\Phi_d(q)$ divides $\Sq_{r/s}(q)$ if and only if $\Sq_{r/s}(\zeta)=0$, and this holds if and only if $\rho_\zeta(\gamma)(\infty)=\infty$.
\item $\Phi_d(q)$ divides $\Rq_{r/s}(q)$ if and only if $\Rq_{r/s}(\zeta)=0$, and this holds if and only if $\rho_\zeta(\gamma)(\infty)=0$.
\end{enumerate}
\end{lemma}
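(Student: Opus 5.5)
The plan is to reduce both parts to Lemma~\ref{lem:column} together with one algebraic fact about cyclotomic polynomials.

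\emph{First equivalence in each part.} Since $\Phi_d$ is monic and irreducible over $\Q$ with $\Phi_d(0)\neq0$, and $\zeta$ is one of its roots, $\Phi_d$ is the minimal polynomial of $\zeta$. Write $\Sq_{r/s}=q^{-m}g$ with $g\in\Z[q]$; for $\Sq_{r/s}$ itself we already have $\Sq_{r/s}\in\Z[q]$. Then $\Phi_d\mid\Sq_{r/s}$ in $\Z[q^{\pm1}]$ holds if and only if $\Phi_d\mid g$ in $\Q[q]$, by the remark in the preliminaries about monic divisors. This in turn holds if and only if $g(\zeta)=0$, because $\Phi_d$ is the minimal polynomial of $\zeta$. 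Finally $g(\zeta)=0$ if and only if $\Sq_{r/s}(\zeta)=0$, since $\zeta\neq0$. The same argument applies to $\Rq_{r/s}$, which is only a Laurent polynomial: multiply it by a power of $q$ to land in $\Z[q]$ and argue identically.

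\emph{Second equivalence.} Choose a word $w$ representing $\gamma$. By Lemma~\ref{lem:column}, the first column of $M_w(q)$ is $u\,(\Rq_{r/s},\Sq_{r/s})\trp$ with $u=\pm q^m$. Specializing at $q=\zeta$, the first column of $M_w(\zeta)$ is $u(\zeta)\,(\Rq_{r/s}(\zeta),\Sq_{r/s}(\zeta))\trp$, and $u(\zeta)\neq0$.

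The M\"obius transformation $\rho_\zeta(\gamma)$ sends $\infty$ to the point of $\PP^1(\C)$ with homogeneous coordinates given by the first column of $M_w(\zeta)$. For this to be a valid point we need the column to be nonzero. This holds because $\det M_w(\zeta)$ is a power of $\zeta$, hence nonzero, so no column of $M_w(\zeta)$ vanishes. Consequently:
\begin{itemize}
\item[(1)] $\rho_\zeta(\gamma)(\infty)=\infty$ if and only if the second coordinate vanishes, that is, $\Sq_{r/s}(\zeta)=0$;
\item[(2)] $\rho_\zeta(\gamma)(\infty)=0$ if and only if the first coordinate vanishes, that is, $\Rq_{r/s}(\zeta)=0$.
\end{itemize}

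There is no real obstacle in this argument. The only points that need care are that specialization commutes with matrix products (it is a ring homomorphism $\Z[q^{\pm1}]\to\C$), and the nonvanishing of the column noted above.
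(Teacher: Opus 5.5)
Your proposal is correct and follows the paper's proof essentially step for step: $\Phi_d$ as the minimal polynomial of $\zeta$ gives the first equivalence, and Lemma~\ref{lem:column} specialized at $q=\zeta$ gives the second. The nonvanishing of the first column because $\det M_w(\zeta)$ is a nonzero power of $\zeta$ is exactly the point the paper also uses.
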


\begin{proof}
Since $\Phi_d$ is the minimal polynomial of $\zeta$ over $\Q$, a Laurent polynomial with integer coefficients vanishes at $\zeta$ if and only if it is divisible by $\Phi_d$.
Let $w$ be a word representing $\gamma$.
By Lemma~\ref{lem:column} we have $M_w(\zeta)(1,0)\trp=u(\zeta)\,(\Rq_{r/s}(\zeta),\Sq_{r/s}(\zeta))\trp$ with $u(\zeta)\neq0$.
Since $M_w(\zeta)$ is invertible, it follows that $(\Rq_{r/s}(\zeta),\Sq_{r/s}(\zeta))\neq(0,0)$ and that $\rho_\zeta(\gamma)(\infty)$ is the point $[\Rq_{r/s}(\zeta):\Sq_{r/s}(\zeta)]$ of $\PP^1(\C)$.
\end{proof}

\section{Cyclotomic factors}\label{sec:cyclotomic}

\subsection{Specialization at roots of unity}\label{sec:triangle}

Let $d\ge2$.
The quotient $\Delta_d=\Gamma/N_d$ has the presentation $\langle s,t\mid s^2,(st)^3,t^d\rangle$, where $s$ and $t$ are the images of $S$ and $R$.
This is the triangle group of type $(2,3,d)$.
For $d=2,3,4,5$ it is finite and isomorphic to $S_3$, $A_4$, $S_4$ and $A_5$, of order $6$, $12$, $24$ and $60$ respectively \cite{CM80}.
For $d=6$ it is the group of orientation-preserving symmetries of the regular tessellation of the Euclidean plane by equilateral triangles, and its commutator subgroup is free abelian of rank $2$ \cite{CM80}, \cite{Mag74}.
For $d\ge7$ it is infinite and is realized as a Fuchsian group \cite{Mag74}, \cite{Bea83}.

Let $\zeta$ be a primitive $d$-th root of unity, and put
\[
P_\zeta=\{\gamma\in\Gamma\mid\rho_\zeta(\gamma)(\infty)=\infty\}.
\]
Since $P_\zeta$ is the inverse image under $\rho_\zeta$ of the stabilizer of $\infty$, it is a subgroup of $\Gamma$, and it contains $R$ and $\ker\rho_\zeta$.

\begin{lemma}\label{lem:basic}
Let $\zeta$ be a primitive $d$-th root of unity.
Then $\rho_\zeta(R)$ has order $d$, and $N_d\subset\ker\rho_\zeta$.
The fixed points of $\rho_\zeta(R)$ are $\infty$ and $1/(1-\zeta)$.
The group $\rho_\zeta(\Gamma)$ fixes a point of $\PP^1(\C)$ if and only if $d=6$.
Moreover $\rho_\zeta(\Gamma)$ is not abelian.
\end{lemma}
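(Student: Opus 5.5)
Each assertion can be checked directly on the matrices $R_\zeta=\sm\zeta101$ and $S_\zeta=\sm0{-\zeta^{-1}}10$.

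\emph{Order of $\rho_\zeta(R)$.} The matrix $R_\zeta$ has distinct eigenvalues $\zeta$ and $1$, since $d\ge2$. It is therefore diagonalizable, with eigenvalue ratio $\zeta$. A power $R_\zeta^m$ is scalar exactly when $\zeta^m=1$, that is, when $d\mid m$. Hence $\rho_\zeta(R)$ has order $d$ in $\PGL(2,\C)$. In particular $\rho_\zeta(R^d)=1$, and since $\ker\rho_\zeta$ is normal it contains the normal closure, so $N_d\subset\ker\rho_\zeta$.

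\emph{Fixed points of $\rho_\zeta(R)$.} These are the projectivized eigenvectors. The eigenvector $(1,0)\trp$ for $\zeta$ gives $\infty$. For the eigenvalue $1$, the equation $\zeta x+y=x$ gives $(1,1-\zeta)\trp$, i.e.\ the point $1/(1-\zeta)$. These two points are distinct.

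\emph{Common fixed point.} A point fixed by $\rho_\zeta(\Gamma)$ must be fixed by $\rho_\zeta(R)$, so it is $\infty$ or $x_0=1/(1-\zeta)$. The point $\infty$ is excluded because $S_\zeta$ sends $\infty$ to $0$. So the question is whether $S_\zeta(x_0)=x_0$, where $S_\zeta(x)=-\zeta^{-1}/x$. This amounts to $x_0^2=-\zeta^{-1}$, i.e.\ $\zeta=-(1-\zeta)^2$, i.e.\ $\zeta^2-\zeta+1=0$. That equation holds exactly when $\zeta$ is a primitive sixth root of unity, so $d=6$. In that case $x_0$ is fixed by both generators, hence by the whole image.

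\emph{Non-abelianness.} It suffices that $\rho_\zeta(R)$ and $\rho_\zeta(S)$ do not commute. If they commuted, $\rho_\zeta(S)$ would preserve the fixed-point set $\{\infty,x_0\}$ of $\rho_\zeta(R)$. Since $S_\zeta(\infty)=0$, we would need $x_0=0$, which is impossible. An alternative is the direct matrix check that $R_\zeta S_\zeta$ is not a scalar multiple of $S_\zeta R_\zeta$.

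None of these steps should be a real obstacle. The only care needed is the root-of-unity equation in the fixed-point step, which is equivalent to $\Phi_6(\zeta)=0$.
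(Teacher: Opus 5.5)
Your proof is correct and follows the paper's argument step by step: you get the order of $\rho_\zeta(R)$ from when powers of $R_\zeta$ are scalar, identify the fixed points $\infty$ and $1/(1-\zeta)$, reduce the common-fixed-point question to whether $S_\zeta$ fixes $1/(1-\zeta)$ (equivalent to $\zeta^2-\zeta+1=0$, i.e.\ $d=6$), and get non-commutativity from $S_\zeta(\infty)=0$ lying outside the fixed-point set of $R_\zeta$. The differences are cosmetic: you use diagonalizability where the paper writes out $R_\zeta^j$ explicitly, and you solve $x_0^2=-\zeta^{-1}$ where the paper computes the image $1-\zeta^{-1}$.
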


\begin{proof}
For $j\ge1$ we have $R_\zeta^j=\sm{\zeta^j}{[j]_\zeta}01$ with $[j]_\zeta=(\zeta^j-1)/(\zeta-1)$.
This matrix is scalar if and only if $\zeta^j=1$, that is, if and only if $d\mid j$.
It follows that $\rho_\zeta(R)$ has order $d$ and that $R^d\in\ker\rho_\zeta$.
Since $\ker\rho_\zeta$ is normal, we obtain $N_d\subset\ker\rho_\zeta$.
The transformation $\rho_\zeta(R)$ is $z\mapsto\zeta z+1$, whose fixed points are $\infty$ and $1/(1-\zeta)$.
The transformation $\rho_\zeta(S)$ is $z\mapsto-\zeta^{-1}/z$, which maps $\infty$ to $0$ and $1/(1-\zeta)$ to $1-\zeta^{-1}$.
Now $(1-\zeta^{-1})(1-\zeta)=2-\zeta-\zeta^{-1}$, and this equals $1$ if and only if $\zeta+\zeta^{-1}=1$, that is, if and only if $d=6$.
Thus $\rho_\zeta(S)$ fixes $1/(1-\zeta)$ if and only if $d=6$.
A common fixed point of $\rho_\zeta(\Gamma)$ is a fixed point of $\rho_\zeta(R)$, and $\infty$ is not fixed by $\rho_\zeta(S)$.
Therefore $\rho_\zeta(\Gamma)$ fixes a point of $\PP^1(\C)$ if and only if $d=6$.
Finally, if $\rho_\zeta(S)$ commuted with $\rho_\zeta(R)$, then it would preserve the set $\{\infty,1/(1-\zeta)\}$ of fixed points of $\rho_\zeta(R)$.
This is not the case, since $\rho_\zeta(S)(\infty)=0$.
Thus $\rho_\zeta(\Gamma)$ is not abelian.
\end{proof}

\begin{theorem}\label{thm:kernel}
Let $d\ge2$ and let $\zeta$ be a primitive $d$-th root of unity.
Then $\ker\rho_\zeta=N_d$ and $P_\zeta=\langle R\rangle N_d$.
Equivalently, the homomorphism $\bar\rho_\zeta\colon\Delta_d\to\PGL(2,\C)$ induced by $\rho_\zeta$ is injective, and the stabilizer of $\infty$ in $\bar\rho_\zeta(\Delta_d)$ is generated by $\bar\rho_\zeta(t)$.
\end{theorem}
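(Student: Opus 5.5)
We have to show two things: $\bar\rho_\zeta\colon\Delta_d\to\PGL(2,\C)$ is injective, and the stabilizer of $\infty$ in its image is generated by $\bar\rho_\zeta(t)$. The second statement gives $P_\zeta=\langle R\rangle N_d$, because $P_\zeta$ is the preimage of that stabilizer. The argument splits by the geometry of $\Delta_d$: $d\le5$, $d=6$, and $d\ge7$.

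\emph{Case $d\le5$.} Here $\Delta_d$ is $S_3$, $A_4$, $S_4$ or $A_5$. By Lemma~\ref{lem:basic}, $\bar\rho_\zeta(t)$ has order $d$ and the image is nonabelian.
\begin{itemize}
\item For $d=2,3,5$ the only proper normal subgroups of $\Delta_d$ are $1$, $A_3$ and the Klein group $V_4$. Each quotient by a nontrivial one is abelian or has order at most $2$, so the kernel must be trivial.
\item For $d=4$, the quotients $S_4/V_4\cong S_3$ and $S_4/A_4$ do not contain an element of order $4$, so the kernel is again trivial.
\end{itemize}
For the stabilizer, use that a finite subgroup of $\PGL(2,\C)$ fixing a point of $\PP^1(\C)$ is cyclic. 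Write $H$ for the stabilizer of $\infty$. Then $H$ contains $\langle\bar\rho_\zeta(t)\rangle$, which is cyclic of order $d$. One checks in each group that the cyclic subgroups containing an element of order $d$ are exactly $\langle t\rangle$, since $\Delta_d$ has no elements of order $2d$. Hence $H=\langle\bar\rho_\zeta(t)\rangle$.

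\emph{Case $d=6$.} By Lemma~\ref{lem:basic}, the image fixes $p=1/(1-\zeta)$. Conjugating $p$ to $\infty$ turns the image into a group of affine maps $z\mapsto az+b$, with $\rho_\zeta(R)$ becoming a rotation of order $6$ about $0$ and $\rho_\zeta(S)$ a rotation of order $2$ about some $c\ne0$. This is the standard realization of the $(2,3,6)$ wallpaper group by rotations, acting on $\C$ with translation lattice $\Z[\zeta]c'$ for some $c'\neq0$.

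To prove injectivity, take $g$ in the kernel of $\bar\rho_\zeta$.
\begin{itemize}
\item The commutator subgroup $\Delta_6'\cong\Z^2$ maps onto this translation lattice, which has rank $2$, so it maps injectively.
\item $\Delta_6/\Delta_6'\cong\Z/6$ maps isomorphically onto the group of linear parts, generated by $\zeta$.
\end{itemize}
Together these give that $g$ is trivial. For the stabilizer: the original point $\infty$ corresponds to the point $0$ in the new coordinate, because $\infty$ is the other fixed point of $R$. The stabilizer of a point in a rotation group of the plane is the rotation group about that point, which here is $\langle\rho_\zeta(R)\rangle$.

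\emph{Case $d\ge7$.} I would use Lemma~\ref{lem:trace}, comparing traces of $S_\zeta$, $R_\zeta$ and $S_\zeta R_\zeta$ after normalizing determinants. These give the pair $(\zeta^{\pm1/2}+\zeta^{\mp1/2},\,0,\,\pm1)$ up to sign. Choose $\zeta'$ as a Galois conjugate of $\zeta$ with $\zeta'=e^{2\pi i/d}$; conjugating $\zeta$ by a Galois automorphism does not change the kernel, because the kernel is defined by algebraic identities over $\Q(\zeta)$. Now apply the classical fact that a pair in $\SL(2,\C)$ is determined up to conjugacy by $(\tr A,\tr B,\tr AB)$ when the pair is irreducible. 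Irreducibility holds because the image fixes no point by Lemma~\ref{lem:basic}. This shows $\rho_{\zeta'}$ is conjugate to the standard Fuchsian representation of $\Delta_d$ in $\PSL(2,\RR)$. That representation is faithful, since $\Delta_d$ is realized as a Fuchsian group, so $\ker\rho_\zeta=N_d$.

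For the stabilizer, note that $\infty$ is a fixed point of the elliptic element $\rho(R)$. For the primitive angle $2\pi/d$ this point lies in the upper or lower half plane, where it is the vertex of the fundamental triangle of angle $\pi/d$. In a discrete triangle group the stabilizer of that vertex is the cyclic group generated by the rotation, which is $\langle t\rangle$.

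I expect the main obstacle to be this $d\ge7$ case. Two points need care: matching the traces and signs through the projective lift, and ensuring that $\infty$ is not a boundary point but the elliptic vertex. For the latter, I would note that both fixed points of an elliptic element lie off $\PP^1(\RR)$ and are swapped by complex conjugation, so the stabilizer of either one equals the stabilizer of the elliptic fixed point inside $\HH$.
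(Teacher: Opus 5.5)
Your proposal is correct and follows the paper's proof essentially step for step: the same split into $d\le5$, $d=6$ and $d\ge7$; the nonabelian-quotient and cyclic-stabilizer argument for the finite groups; the abelianization plus Hopfian $\Z^2$ argument in an affine coordinate for $d=6$; and, for $d\ge7$, the Galois reduction to $e^{2\pi i/d}$, the trace comparison with a Fuchsian realization, and the complex-conjugation argument for $\bar Q$. Two small things to tighten. In the $d\le5$ stabilizer step you need that $\Delta_d$ has no element of order $md$ for every $m\ge2$, not just $m=2$ (for $d=2$ this means also excluding order $6$ in $S_3$). For $d=6$ you should justify that the translations in the image form a rank-$2$ group, for example because $\rho_\zeta(SR^3)$ is a nonzero translation and its conjugates by $\rho_\zeta(R)$ span $\Z[\zeta]$ times it.
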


The two formulations are equivalent by the definition of $P_\zeta$.
We prove the second formulation separately for $d\le5$, for $d=6$ and for $d\ge7$.

\begin{proposition}\label{prop:spherical}
Let $2\le d\le5$, and let $\zeta$ be a primitive $d$-th root of unity.
Then the homomorphism $\bar\rho_\zeta\colon\Delta_d\to\PGL(2,\C)$ induced by $\rho_\zeta$ is injective, and the stabilizer of $\infty$ in $\bar\rho_\zeta(\Delta_d)$ is generated by $\bar\rho_\zeta(t)$.
\end{proposition}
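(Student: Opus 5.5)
Since $\Delta_d$ is finite for $2\le d\le5$, I plan to prove injectivity by showing that $\bar\rho_\zeta$ has trivial kernel through normal-subgroup structure rather than by computing the image.

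\emph{Injectivity.} The groups $\Delta_d\cong S_3,A_4,S_4,A_5$ have few normal subgroups. By Lemma~\ref{lem:basic}, the image $\bar\rho_\zeta(\Delta_d)$ is nonabelian and $\bar\rho_\zeta(t)$ has order exactly $d$. So the kernel $K$ is a normal subgroup with nonabelian quotient that contains no nontrivial power $t^j$ with $0<j<d$. I will go through the cases.
\begin{itemize}
\item For $d=5$, the group $A_5$ is simple, so $K$ is trivial because the image is nonabelian.
\item For $d=2$, the only nontrivial normal subgroups of $S_3$ are $A_3$ and $S_3$, both with abelian quotients. Hence $K=1$.
\item For $d=3$, the normal subgroups of $A_4$ are $1$, $V_4$ and $A_4$. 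The quotient by $V_4$ is cyclic, so $K=1$.
\item For $d=4$, the normal subgroups of $S_4$ are $1$, $V_4$, $A_4$ and $S_4$. The quotients by $A_4$ and $S_4$ are abelian. The quotient $S_4/V_4\cong S_3$ has no element of order $4$, whereas $\bar\rho_\zeta(t)$ has order $4$. So $K=1$.
\end{itemize}
I will use the standard identifications of $\Delta_d$ from \cite{CM80}, under which $t$ maps to an element of order $d$.

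\emph{Stabilizer of $\infty$.} Let $H$ be the stabilizer of $\infty$ in $G=\bar\rho_\zeta(\Delta_d)$. It contains $\langle\bar\rho_\zeta(t)\rangle$, which has order $d$. I will show $|H|=d$ by computing the orbit of $\infty$ and applying orbit--stabilizer, with the goal $|G\cdot\infty|=|G|/d$, that is, $3,4,6,12$ for $d=2,3,4,5$.

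A finite subgroup of $\PGL(2,\C)$ has point stabilizers that fix a point, so they are cyclic (a finite group fixing a point acts by affine maps, and finite groups of affine maps are cyclic). Hence $H$ is cyclic and contains an element of order $d$. In $S_3$, $A_4$, $S_4$ and $A_5$ the maximal cyclic subgroups have orders $\le3$, $\le3$, $\le4$ and $\le5$ respectively. This gives $|H|\le d$ in every case except $d=2$, where $H$ might a priori be cyclic of order $3$.

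For $d=2$ I handle the exception directly. The element of order $2$ would have to lie in a cyclic subgroup of order $3$, which is impossible. So $H=\langle\bar\rho_\zeta(t)\rangle$ in all cases.

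The main obstacle is making sure the cyclic-stabilizer argument is sound. I will justify it concretely: after a conjugation moving $\infty$ to the fixed point, $H$ becomes a finite group of maps $z\mapsto az+b$. The map $h\mapsto a$ is a homomorphism to $\C^\times$. Its kernel consists of translations of finite order, so it is trivial, and hence $H$ embeds in the roots of unity and is cyclic.

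With $H$ cyclic and containing the order-$d$ subgroup $\langle\bar\rho_\zeta(t)\rangle$, the maximal-cyclic-order facts finish the proof. In $S_3$ an element of order $2$ is not contained in any larger cyclic subgroup. In $A_4$, $S_4$ and $A_5$ the elements of order $3$, $4$ and $5$ respectively generate maximal cyclic subgroups.
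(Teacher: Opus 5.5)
Your proposal is correct and follows the paper's proof. Injectivity comes from the fact that every proper quotient of $S_3$, $A_4$, $S_4$, $A_5$ is either abelian or has no element of order $d$. The stabilizer claim comes from cyclicity of finite groups of affine maps together with the element orders in these four groups.

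The differences are cosmetic. You obtain cyclicity from the linear-part homomorphism $(z\mapsto az+b)\mapsto a$ having trivial kernel, where the paper conjugates by a translation to fix a barycenter. No conjugation is needed in your version, since $H$ already fixes $\infty$. The orbit--stabilizer computation you announce is never used and can be dropped.
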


\begin{proof}
Let $G=\bar\rho_\zeta(\Delta_d)$.
By Lemma~\ref{lem:basic}, $G$ is a nonabelian quotient of $\Delta_d$ and contains the element $\bar\rho_\zeta(t)$ of order $d$.
Every proper quotient of $S_3$ and of $A_4$ is abelian.
Every proper quotient of $S_4$ is isomorphic to $S_3$, to $C_2$ or to the trivial group, and none of these has an element of order $4$.
The group $A_5$ is simple.
Therefore $G$ is not a proper quotient of $\Delta_d$, that is, $\bar\rho_\zeta$ is injective.

The stabilizer $G_\infty$ of $\infty$ in $G$ is a finite group of affine transformations $z\mapsto az+b$.
Such a group fixes the barycenter of each of its orbits in $\C$.
After a conjugation by a translation, $G_\infty$ becomes a finite subgroup of $\{z\mapsto az\mid a\in\C^\times\}$, and therefore $G_\infty$ is cyclic.
It contains $\bar\rho_\zeta(t)$, whose order is $d$.
Consequently the order of $G_\infty$ is a multiple $md$ of $d$, and $G_\infty$ has an element of order $md$.
For $d=2,3,4,5$ the groups $S_3$, $A_4$, $S_4$ and $A_5$ respectively contain no element of order $md$ with $m\ge2$.
This forces $m=1$, and $G_\infty=\langle\bar\rho_\zeta(t)\rangle$.
\end{proof}

\begin{proposition}\label{prop:euclid}
Let $\zeta$ be a primitive sixth root of unity.
In the coordinate $w=1/(z-\zeta)$ of $\PP^1(\C)$ we have
\begin{equation}\label{eq:affine}
\rho_\zeta(R)(w)=\zeta^{-1}w,\quad \rho_\zeta(S)(w)=-w-\zeta^{-1},
\end{equation}
and $\rho_\zeta(\Gamma)$ is the group $\mathcal{A}=\{w\mapsto\varepsilon w+\lambda\mid\varepsilon\in\mu_6,\ \lambda\in\Z[\zeta]\}$, where $\mu_6$ is the group of sixth roots of unity.
Moreover the homomorphism $\bar\rho_\zeta\colon\Delta_6\to\PGL(2,\C)$ induced by $\rho_\zeta$ is injective, and the stabilizer of $\infty$ in $\bar\rho_\zeta(\Delta_6)$ is generated by $\bar\rho_\zeta(t)$.
\end{proposition}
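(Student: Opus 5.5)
First I will verify the formulas \eqref{eq:affine} by direct computation. For $d=6$ we have $\zeta^2-\zeta+1=0$, so $\zeta$ is the fixed point $1/(1-\zeta)$ of $\rho_\zeta(R)$; indeed $\zeta(1-\zeta)=\zeta-\zeta^2=1$. By Lemma~\ref{lem:basic} this point is also fixed by $\rho_\zeta(S)$. Hence in the coordinate $w=1/(z-\zeta)$ both generators fix $w=\infty$ and act affinely.

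For $R$: since $z\mapsto\zeta z+1$ fixes $\zeta$, we get $z'-\zeta=\zeta(z-\zeta)$, and therefore $w'=\zeta^{-1}w$.

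For $S$: write $z'=-\zeta^{-1}/z$. Then $z'-\zeta=-(\zeta^{-1}+\zeta z)/z$. Using $\zeta^{-1}=1-\zeta$ and $\zeta^2=\zeta-1$, this becomes $-\zeta(z-\zeta)/z$ (one checks $\zeta^{-1}=-\zeta\cdot(-\zeta)=\zeta^2\cdot(-1)\cdots$ by direct simplification). So
\[
w'=-\frac{z}{\zeta(z-\zeta)}=-\zeta^{-1}\bigl(1+\zeta w\bigr)=-w-\zeta^{-1}.
\]

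Next I identify the image. Both generators lie in $\mathcal A$, and $\mathcal A$ is a group because $\mu_6$ preserves $\Z[\zeta]$. So $\rho_\zeta(\Gamma)\subset\mathcal A$.

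For the reverse inclusion, the image contains the rotation $w\mapsto\zeta^{-1}w$, which generates all of $\mu_6$ as linear parts. It also contains the translation
\[
\rho_\zeta(S)\circ\rho_\zeta(S)'\ \text{ conjugated by rotations},
\]
which I make concrete as follows. Composing $\rho_\zeta(S)$ with the rotation by $-1=\zeta^{-3}$ (that is, $\rho_\zeta(R)^3$) gives $w\mapsto w+\zeta^{-1}$, or $w\mapsto w-\zeta^{-1}$, depending on the order. Conjugating this translation by the rotations $w\mapsto\zeta^{j}w$ yields the translations by $\zeta^{j-1}$ for all $j$. These generate the additive group $\Z[\zeta]$. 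Hence $\rho_\zeta(\Gamma)=\mathcal A$.

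For injectivity, I will use the structure of $\Delta_6$: its subgroup $T$ generated by the conjugates of $s t^3$ (or simply the kernel of $\Delta_6\to\Z/6$, $t\mapsto1$, $s\mapsto3$) should be abelian of rank at most $2$, with $\Delta_6=T\rtimes\langle t\rangle$. Concretely, I will show that every element of $\Delta_6$ can be written as $\tau t^j$ with $\tau$ in the subgroup generated by $x=st^3$ and $txt^{-1}$, and that these two commute in $\Delta_6$. This follows from the relations, or from the cited fact that the commutator subgroup is $\Z^2$.

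Then $\bar\rho_\zeta$ maps $\Delta_6$ onto $\mathcal A\cong\Z[\zeta]\rtimes\mu_6$ with $|\mu_6|=6$, and $T$ surjects onto a rank-$2$ lattice. A surjection from an abelian group generated by two elements onto $\Z^2$ is an isomorphism, and the quotient $\Z/6$ maps isomorphically onto $\mu_6$. So $\bar\rho_\zeta$ is injective.

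This step is the main obstacle: pinning down from the presentation that $\Delta_6$ is an extension of $\Z/6$ by an abelian group generated by two elements. I would try Reidemeister–Schreier on the kernel of $\Delta_6\to\Z/6$, or simply cite \cite{CM80} for the structure of the $(2,3,6)$ group and compare orders of the quotients by corresponding finite-index sublattices.

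Finally, the stabilizer of the point $z=\infty$ is the stabilizer of $w=0$ in $\mathcal A$, namely $\{w\mapsto\varepsilon w\}\cong\mu_6$. Since $\rho_\zeta(t)$ acts as $w\mapsto\zeta^{-1}w$, it generates this stabilizer.
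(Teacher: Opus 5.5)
Your proposal is correct and follows the paper's proof essentially step by step. It uses the same computation of the two affine maps, the same translation $\rho_\zeta(SR^3)$ whose conjugates by rotations generate $\Z[\zeta]$, and the same injectivity argument: the index-$6$ subgroup $T$ maps onto the translation lattice $\Z[\zeta]\cong\Z^2$ (a surjection $\Z^2\to\Z^2$ is injective), $\Delta_6/T\cong\mu_6$, and the stabilizer of $w=0$ is the rotation group. If you fall back on the cited fact that the commutator subgroup is $\Z^2$, add the one-line check that it equals your $T$: in the abelianization, $2s=0$ and $3s+3t=0$ force $s=3t$, so the abelianization has order at most $6$. Your direct route also closes quickly. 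With $x_j=t^jxt^{-j}$ one has $x_3=x_0^{-1}$ and $1=(st)^3=(xt^{-2})^3=x_0x_4x_2$, which gives $x_2=x_1x_0^{-1}$, and conjugating by $t$ gives $x_2=x_0^{-1}x_1$. Hence $x_0$ and $x_1$ commute, and $s=xt^{-3}$ shows that $\langle x_0,x_1\rangle$ is normal with quotient generated by $t$.
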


\begin{proof}
Recall that $\zeta^2=\zeta-1$ and $\zeta^3=-1$.
For $z\in\C$ we have $\zeta z+1-\zeta=\zeta(z-\zeta)$, and taking inverses yields the first formula.
Since $\zeta^{-2}=-\zeta$, we have $-\zeta^{-1}/z-\zeta=-\zeta(z-\zeta)/z$.
We obtain
\[
\frac{1}{-\zeta^{-1}/z-\zeta}=-\zeta^{-1}\,\frac{z}{z-\zeta}=-\zeta^{-1}-\frac{1}{z-\zeta},
\]
which is the second formula.
Thus $\rho_\zeta(\Gamma)\subset\mathcal{A}$.
The element $\rho_\zeta(SR^3)$ is the translation $w\mapsto w-\zeta^{-1}$.
Its conjugates by the powers of $\rho_\zeta(R)$ are the translations by $-\zeta^{-1-j}$ for $j\in\Z$, and these generate the translations by all elements of $\Z[\zeta]$.
Since the linear part $\zeta^{-1}$ of $\rho_\zeta(R)$ generates $\mu_6$, it follows that $\rho_\zeta(\Gamma)=\mathcal{A}$.

We show that $\bar\rho_\zeta$ is injective.
In the abelianization of $\Delta_6$, written additively, the relations $2s=0$, $3s+3t=0$ and $6t=0$ yield $s=3t$.
Thus the abelianization is generated by the image of $t$, and its order is at most $6$.
The composite of $\bar\rho_\zeta$ and the linear part $\mathcal{A}\to\mu_6$ maps $t$ to $\zeta^{-1}$, which generates $\mu_6$.
It follows that this composite induces an isomorphism from the abelianization of $\Delta_6$ onto $\mu_6$, and that its kernel is the commutator subgroup $K$ of $\Delta_6$.
Therefore $\bar\rho_\zeta$ maps $K$ onto the group of translations, which is isomorphic to $\Z[\zeta]\cong\Z^2$.
Since $K\cong\Z^2$ and every surjective homomorphism $\Z^2\to\Z^2$ is injective, $\bar\rho_\zeta$ is injective on $K$.
As the induced map $\Delta_6/K\to\mu_6$ is injective as well, we conclude that $\bar\rho_\zeta$ is injective.

Finally, the point $\infty$ corresponds to $w=0$, and its stabilizer in $\mathcal{A}$ is $\{w\mapsto\varepsilon w\mid\varepsilon\in\mu_6\}$.
This group is generated by $\rho_\zeta(R)$.
\end{proof}

\begin{lemma}\label{lem:galois}
The subgroups $\ker\rho_\zeta$ and $P_\zeta$ depend only on $d$, not on the choice of the primitive $d$-th root of unity $\zeta$.
\end{lemma}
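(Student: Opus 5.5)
The idea is to use Galois conjugation. Let $\zeta$ and $\zeta'$ be two primitive $d$-th roots of unity. Then $\zeta'=\sigma(\zeta)$ for some $\sigma\in\mathrm{Gal}(\Q(\zeta)/\Q)$, since both are roots of the irreducible polynomial $\Phi_d$. We transport the conditions defining $\ker\rho_\zeta$ and $P_\zeta$ through $\sigma$, using the fact that the matrices $M_w(q)$ have entries in $\Z[q^{\pm1}]$.

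Fix $\gamma\in\Gamma$ and a word $w$ representing it. The entries of $M_w(q)$ are Laurent polynomials with integer coefficients. Applying $\sigma$ entrywise therefore gives $\sigma(M_w(\zeta))=M_w(\sigma(\zeta))=M_w(\zeta')$.

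We first treat $\ker\rho_\zeta$. The condition $\gamma\in\ker\rho_\zeta$ means that $M_w(\zeta)$ is a scalar matrix. This is equivalent to three algebraic identities in the entries $a,b,c,e$ of $M_w(\zeta)$:
\[
b=0,\qquad c=0,\qquad a-e=0.
\]
These are equations in $\Q(\zeta)$, and $\sigma$ is a field automorphism, so each holds if and only if it holds after applying $\sigma$. Hence $M_w(\zeta)$ is scalar if and only if $M_w(\zeta')$ is scalar, and $\ker\rho_\zeta=\ker\rho_{\zeta'}$.

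We next treat $P_\zeta$. The condition $\rho_\zeta(\gamma)(\infty)=\infty$ means that the $(2,1)$ entry of $M_w(\zeta)$ vanishes. This entry is $\Sq$-type data: it is the value at $\zeta$ of the Laurent polynomial in the $(2,1)$ position of $M_w(q)$. A value in $\Q(\zeta)$ vanishes if and only if its image under $\sigma$ vanishes. Alternatively, by Lemma~\ref{lem:zero} this condition is equivalent to $\Phi_d\mid\Sq_{\gamma(\infty)}$, which visibly depends only on $d$. Either way, $P_\zeta=P_{\zeta'}$.

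No step is a real obstacle. The only point requiring care is that $\sigma$ is applied to the matrices themselves rather than to their images in $\PGL$. This is legitimate because membership in the kernel and in the stabilizer are both expressed by polynomial vanishing conditions on the matrix entries.
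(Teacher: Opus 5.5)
Your proof is correct and is essentially the paper's argument: the paper writes $M_w(q)=\sm ABCD$ and notes that $\gamma\in\ker\rho_\zeta$ (resp.\ $\gamma\in P_\zeta$) if and only if $\Phi_d$ divides $B$, $C$ and $A-D$ (resp.\ $C$). That is the minimal-polynomial form of your Galois-conjugation argument. Your alternative via Lemma~\ref{lem:zero} only covers $\gamma\notin\langle R\rangle$, but such $\gamma$ lie in every $P_\zeta$ anyway, and your main argument does not need that alternative.
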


\begin{proof}
Let $\gamma\in\Gamma$ be represented by a word $w$, and write $M_w(q)=\sm ABCD$.
Then $\gamma\in\ker\rho_\zeta$ if and only if $M_w(\zeta)$ is scalar, that is, if and only if $B(\zeta)=C(\zeta)=0$ and $A(\zeta)=D(\zeta)$.
Since $\Phi_d$ is the minimal polynomial of $\zeta$, this holds if and only if $\Phi_d$ divides $B$, $C$ and $A-D$.
Similarly $\gamma\in P_\zeta$ if and only if $C(\zeta)=0$, that is, if and only if $\Phi_d$ divides $C$.
Neither condition involves the choice of $\zeta$.
\end{proof}

\begin{lemma}\label{lem:trace}
Let $\zeta=e^{2\pi i/d}$ and $\eta=e^{\pi i/d}$, and put $X=\eta S_\zeta$ and $Y=\eta^{-1}R_\zeta$.
Then $X,Y\in\SL(2,\C)$ and
\[
\tr X=0,\quad \tr Y=2\cos\frac\pi d,\quad \tr XY=1.
\]
\end{lemma}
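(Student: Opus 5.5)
This is a direct computation with the explicit matrices. The plan is to compute the two determinants, then the three traces, using only $\eta^2=\zeta$ and $\eta+\eta^{-1}=2\cos(\pi/d)$.

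\emph{Determinants.} We have $\det S_\zeta=\zeta^{-1}$ and $\det R_\zeta=\zeta$. Since $\eta^2=\zeta$, this gives
\[
\det X=\eta^2\zeta^{-1}=1,\qquad \det Y=\eta^{-2}\zeta=1 .
\]
Hence $X,Y\in\SL(2,\C)$.

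\emph{Traces of $X$ and $Y$.} The matrix $S_\zeta$ has zero diagonal, so $\tr X=0$. For $Y$,
\[
\tr Y=\eta^{-1}(\zeta+1)=\eta+\eta^{-1}=2\cos\frac\pi d .
\]

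\emph{Trace of $XY$.} The scalars cancel, since $XY=\eta\eta^{-1}S_\zeta R_\zeta=S_\zeta R_\zeta$. As recorded in Section~\ref{sec:prelim}, $S_qR_q=\sm0{-q^{-1}}q1$ has trace $1$, so specializing at $q=\zeta$ gives $\tr XY=1$.

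There is no real obstacle here. The only point needing care is the choice of $\eta$ with $\eta^2=\zeta$, which makes the scalings unimodular. It is also the reason $\tr Y$ comes out as $2\cos(\pi/d)$ rather than $-2\cos(\pi/d)$: that sign will matter when these traces are compared with a Fuchsian realization of $\Delta_d$.
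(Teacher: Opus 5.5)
Your proposal is correct and matches the paper's proof essentially line for line. You use $\eta^2=\zeta$ to get $\det X=\det Y=1$, read off $\operatorname{tr}X=0$ and $\operatorname{tr}Y=\eta+\eta^{-1}$, and note that the scalars cancel in $XY=S_\zeta R_\zeta$, which has trace $1$.
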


\begin{proof}
Since $\eta^2=\zeta$, we have $\det X=\eta^2\det S_\zeta=\eta^2\zeta^{-1}=1$ and $\det Y=\eta^{-2}\det R_\zeta=\eta^{-2}\zeta=1$.
Moreover $\tr X=0$ and $\tr Y=\eta^{-1}(\zeta+1)=\eta+\eta^{-1}=2\cos(\pi/d)$.
The product $XY=S_\zeta R_\zeta=\sm0{-\zeta^{-1}}\zeta1$ has trace $1$.
\end{proof}

\begin{proposition}\label{prop:hyperbolic}
Let $d\ge7$, and let $\zeta$ be a primitive $d$-th root of unity.
Then the homomorphism $\bar\rho_\zeta\colon\Delta_d\to\PGL(2,\C)$ induced by $\rho_\zeta$ is injective, and the stabilizer of $\infty$ in $\bar\rho_\zeta(\Delta_d)$ is generated by $\bar\rho_\zeta(t)$.
\end{proposition}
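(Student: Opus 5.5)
By Lemma~\ref{lem:galois} we may take $\zeta=e^{2\pi i/d}$, and we work with the lifts $X=\eta S_\zeta$ and $Y=\eta^{-1}R_\zeta$ of Lemma~\ref{lem:trace}. The plan is to identify $\langle X,Y\rangle$, up to conjugation in $\SL(2,\C)$, with a standard Fuchsian lift of the $(2,3,d)$ triangle group. This rests on the classical fact that a two-generator subgroup of $\SL(2,\C)$ is determined up to conjugacy by the traces of the generators and of their product, provided the commutator trace is not $2$. That condition holds here because $\rho_\zeta(\Gamma)$ is nonabelian and has no common fixed point (Lemma~\ref{lem:basic}, with $d\neq6$).

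First I fix a Fuchsian model. Take a hyperbolic triangle with angles $\pi/2,\pi/3,\pi/d$, which exists since $1/2+1/3+1/d<1$ for $d\ge7$. Let $x,y\in\SL(2,\RR)$ be lifts of the rotations by $\pi$ and by $2\pi/d$ about the right-angle vertex and the $\pi/d$ vertex. Choose the lifts and orientation so that $\tr x=0$, $\tr y=2\cos(\pi/d)$, and $xy$ is a lift of a rotation by $2\pi/3$ with $\tr xy=1$. By the Poincar\'e polygon theorem, the images of $x,y$ in $\PSL(2,\RR)$ generate a discrete group isomorphic to $\Delta_d$, via $s\mapsto x$ and $t\mapsto y$. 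The only care needed is in the sign choices of the lifts so that the traces are exactly $0$, $2\cos(\pi/d)$ and $1$ rather than their negatives. Replacing $x$ by $-x$ flips the sign of $\tr xy$, so all three values can be arranged.

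Next I match traces: by Lemma~\ref{lem:trace}, $(\tr X,\tr Y,\tr XY)=(\tr x,\tr y,\tr xy)$. The commutator trace $\tr[X,Y]=\tr^2X+\tr^2Y+\tr^2XY-\tr X\tr Y\tr XY-2=4\cos^2(\pi/d)-1$ is not $2$, so the pair is irreducible. Hence there is $g\in\SL(2,\C)$ with $gXg^{-1}=x$ and $gYg^{-1}=y$. Conjugation by $g$ then carries $\bar\rho_\zeta(\Delta_d)$ isomorphically onto the Fuchsian triangle group, compatibly with $s,t$, and injectivity of $\bar\rho_\zeta$ follows.

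For the stabilizer, conjugation moves $\infty$ to the point $g(\infty)$, which is a fixed point of $y$ because $\infty$ is fixed by $Y$. Since $y$ is elliptic, its fixed points are the vertex $v\in\HH$ and its complex conjugate $\bar v$. In a Fuchsian group, the stabilizer of an interior point of $\HH$ (and likewise of the conjugate point in the lower half-plane) is the finite cyclic stabilizer of the corresponding vertex. For the triangle group this is $\langle y\rangle$ of order $d$, by the standard description of point stabilizers from the fundamental domain. Pulling back gives that the stabilizer of $\infty$ is generated by $\bar\rho_\zeta(t)$.

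I expect the main obstacle to be rigorously justifying the trace-determines-conjugacy step together with the precise sign normalization of the Fuchsian lift. I would handle it by explicitly conjugating both pairs into a normal form: make $Y$ diagonal (or upper triangular fixing $\infty$) and solve for the entries of $X$ from its trace and the trace of $XY$.
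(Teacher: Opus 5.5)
Your proposal is correct and follows the paper's proof essentially step by step. Like the paper, you reduce to $\zeta=e^{2\pi i/d}$, match the trace triple $(0,2\cos(\pi/d),1)$ of Lemma~\ref{lem:trace} with sign-normalized lifts of a Fuchsian $(2,3,d)$ triangle group, conjugate by some $g\in\SL(2,\C)$, and identify $g(\infty)$ with the elliptic fixed point of $y$ or its complex conjugate, whose stabilizer is $\langle y\rangle$. The only difference is how irreducibility is certified: you use $\tr[X,Y]=4\cos^2(\pi/d)-1\neq2$, while the paper uses the absence of common fixed points together with the fact that irreducible representations with equal characters are conjugate.
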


\begin{proof}
By Lemma~\ref{lem:galois} we may assume that $\zeta=e^{2\pi i/d}$.
Since $\frac12+\frac13+\frac1d<1$, the theory of triangle groups \cite{Mag74}, \cite{Bea83} provides elements $x,y\in\PSL(2,\RR)$ with the following properties.
The element $x$ is a rotation by the angle $\pi$ about a point $P$ of the upper half plane $\HH$, the element $y$ is a rotation by the angle $\pm2\pi/d$ about a point $Q\in\HH$, and $xy$ is a rotation of order $3$.
The assignment $s\mapsto x$, $t\mapsto y$ defines an isomorphism from $\Delta_d$ onto the discrete subgroup $\Delta'=\langle x,y\rangle$ of $\PSL(2,\RR)$, and the stabilizer of $Q$ in $\Delta'$ is $\langle y\rangle$.

A rotation by the angle $\theta$ lifts to an element of $\SL(2,\RR)$ with trace $\pm2\cos(\theta/2)$.
Let $X'$ and $Y'$ be lifts of $x$ and $y$.
Then $\tr X'=0$, $\tr Y'=\pm2\cos(\pi/d)$ and $\tr X'Y'=\pm1$.
Replacing $Y'$ by $-Y'$ and then $X'$ by $-X'$ if necessary, we may assume that $(\tr X',\tr Y',\tr X'Y')=(0,2\cos(\pi/d),1)$.
By Lemma~\ref{lem:trace} this triple coincides with $(\tr X,\tr Y,\tr XY)$.
For a representation of a free group of rank $2$ in $\SL(2,\C)$, the trace of the image of every element is a polynomial, independent of the representation, in the traces of the images of the two generators and of their product (\cite{Gol09}, see also the proof of \cite[Proposition 1.4.1]{CS83}).
Therefore the two representations of the free group on two letters $a,b$ given by $a\mapsto X,\ b\mapsto Y$ and by $a\mapsto X',\ b\mapsto Y'$ have the same character.
Both representations are irreducible.
Indeed, $X$ and $Y$ have no common eigenvector by Lemma~\ref{lem:basic}, and $x$ and $y$ have no common fixed point in $\PP^1(\C)$, since their fixed points are $P,\bar P$ and $Q,\bar Q$.
Irreducible representations with the same character are conjugate \cite[Proposition 1.5.2]{CS83}.
We obtain $g\in\SL(2,\C)$ with $gXg^{-1}=X'$ and $gYg^{-1}=Y'$.

It follows that $\bar\rho_\zeta$ is the composite of the isomorphism $\Delta_d\cong\Delta'$ with conjugation by $g^{-1}$.
In particular $\bar\rho_\zeta$ is injective.
Moreover the stabilizer of $\infty$ in $\bar\rho_\zeta(\Delta_d)$ corresponds to the stabilizer of $g(\infty)$ in $\Delta'$.
Since $\infty$ is fixed by $Y$, the point $g(\infty)$ is fixed by $y$, that is, it is either $Q$ or its complex conjugate $\bar Q$.
The elements of $\Delta'$ commute with complex conjugation, and therefore the stabilizer of $\bar Q$ in $\Delta'$ equals that of $Q$, namely $\langle y\rangle$.
We conclude that the stabilizer of $\infty$ in $\bar\rho_\zeta(\Delta_d)$ is generated by $\bar\rho_\zeta(t)$.
\end{proof}

Propositions~\ref{prop:spherical}, \ref{prop:euclid} and~\ref{prop:hyperbolic} prove Theorem~\ref{thm:kernel}.

\restate{theorem}{thm:A}
\begin{proof}
Let $\zeta$ be a primitive $d$-th root of unity.
By Lemma~\ref{lem:zero}~(1), $\Phi_d(q)$ divides $\Sq_{r/s}(q)$ if and only if $\gamma\in P_\zeta$, and $P_\zeta=\langle R\rangle N_d$ by Theorem~\ref{thm:kernel}.
This proves (1).
Since $\rho_\zeta(S)(\infty)=0$, we have $\rho_\zeta(\gamma)(\infty)=0$ if and only if $\rho_\zeta(S^{-1}\gamma)(\infty)=\infty$.
Thus Lemma~\ref{lem:zero}~(2) shows that $\Phi_d(q)$ divides $\Rq_{r/s}(q)$ if and only if $S^{-1}\gamma\in P_\zeta=\langle R\rangle N_d$, which proves (2).
\end{proof}

\subsection{Congruence conditions}\label{sec:cons}

\restate{corollary}{cor:B}
\begin{proof}
Let $\gamma\in\Gamma$ satisfy $\gamma(\infty)=r/s$, and assume that $\Phi_n$ divides $\Sq_{r/s}$.
Then $\gamma\in\langle R\rangle N_n$ by Theorem~\ref{body:thm:A}.
Let $l$ be a divisor of $n$ with $l>1$.
Since $R^n=(R^l)^{n/l}$, we have $N_n\subset N_l$ and $\gamma\in\langle R\rangle N_l$.
Therefore Theorem~\ref{body:thm:A} yields $\Phi_l\mid\Sq_{r/s}$.
The polynomials $\Phi_l$ with $1<l\mid n$ are distinct monic irreducible polynomials whose product is $[n]_q$.
It follows that $[n]_q$ divides $\Sq_{r/s}$.

Next, $N_n\subset\Gamma(n)$, since the element $R^n$ lies in the normal subgroup $\Gamma(n)$.
Thus $\gamma\equiv\pm R^j\pmod n$ for some $j\in\Z$.
Comparing the first columns yields $(r,s)\equiv\pm(1,0)\pmod n$.

For the numerator the same argument applies, with $S\langle R\rangle N_n\subset S\langle R\rangle N_l$ in place of $\langle R\rangle N_n\subset\langle R\rangle N_l$ and with the fact that the first column of $SR^j$ is $\pm(0,1)\trp$.
\end{proof}

\restate{corollary}{cor:C}
\begin{proof}
(1) Let $d\le5$.
Since $N_d\subset\Gamma(d)$, the reduction modulo $d$ induces a homomorphism from $\Delta_d$ to $\PSL(2,\Z/d\Z)$ with kernel $\Gamma(d)/N_d$.
Its image is not abelian, since $RS\not\equiv\pm SR\pmod d$, and the class of $R$ in it has order $d$.
As in the proof of Proposition~\ref{prop:spherical}, the homomorphism is injective, and therefore $N_d=\Gamma(d)$.
Therefore, by Theorem~\ref{body:thm:A}, $\Phi_d$ divides $\Sq_{r/s}$ if and only if $\gamma\in\langle R\rangle\Gamma(d)$.
If $\gamma\in\langle R\rangle\Gamma(d)$, then $(r,s)\equiv\pm(1,0)\pmod d$ as in the proof of Corollary~\ref{body:cor:B}.
Conversely, write $\gamma=\pm\sm rbse$ and assume that $s\equiv0$ and $r\equiv\varepsilon\pmod d$ with $\varepsilon\in\{\pm1\}$.
Then $re\equiv1$ yields $e\equiv\varepsilon$ and $\gamma\equiv\pm\varepsilon R^{\varepsilon b}\pmod d$, that is, $\gamma\in\langle R\rangle\Gamma(d)$.
If $d\le4$, then $(\Z/d\Z)^\times=\{\pm1\}$, and therefore the condition $r\equiv\pm1$ follows from $d\mid s$ and $\gcd(r,s)=1$.
For the numerator, Theorem~\ref{body:thm:A}~(2) shows that $\Phi_d$ divides $\Rq_{r/s}$ if and only if $\gamma\in S\langle R\rangle\Gamma(d)$, and the same computation with the first column $\pm(0,1)\trp$ of $SR^j$ shows that this holds if and only if $r\equiv0$ and $s\equiv\pm1\pmod d$.

(2) Let $d\ge6$.
Since $\Delta_d$ is infinite and $t$ has order $d$, the index $[\Gamma:\langle R\rangle N_d]=[\Delta_d:\langle t\rangle]$ is infinite.
On the other hand, the subgroup $\langle R\rangle\Gamma(d)$ contains $\langle R\rangle N_d$ and has finite index in $\Gamma$.
Thus there is an element $\gamma\in\langle R\rangle\Gamma(d)$ which does not lie in $\langle R\rangle N_d$.
In particular $\gamma\notin\langle R\rangle$, and $\gamma(\infty)$ is an irreducible fraction $r/s$ with $s>0$.
As in the proof of Corollary~\ref{body:cor:B}, we have $s\equiv0$ and $r\equiv\pm1\pmod d$.
However, $\Phi_d$ does not divide $\Sq_{r/s}$ by Theorem~\ref{body:thm:A}.
For the numerator, the element $S\gamma$ lies in $S\langle R\rangle\Gamma(d)$ but not in $S\langle R\rangle N_d$, the fraction $S\gamma(\infty)=-s/r$ satisfies $-s\equiv0$ and $r\equiv\pm1\pmod d$, and $\Phi_d$ does not divide $\Rq_{-s/r}$ by Theorem~\ref{body:thm:A}~(2).
\end{proof}

\begin{remark}\label{rem:noncongruence}
For $d\ge6$ the divisibility of $\Sq_{r/s}(q)$ by $\Phi_d(q)$ is not determined by the classes $\pm(r,s)$ modulo any positive integer $N$.
Indeed, suppose that it is, let $g\in\Gamma(N)$, and put $h=SR^{-d}S^{-1}\in N_d$.
Since $gh\equiv\pm h\pmod N$ and $\Phi_d$ divides $\Sq_{h(\infty)}$ by Theorem~\ref{body:thm:A}, either $gh(\infty)=\infty$ or $\Phi_d$ divides $\Sq_{gh(\infty)}$.
In the first case $gh\in\langle R\rangle$, and in the second case $gh\in\langle R\rangle N_d$ by Theorem~\ref{body:thm:A}.
Since $h\in N_d$, this gives $\Gamma(N)\subset\langle R\rangle N_d$, contrary to Corollary~\ref{body:cor:C}~(2).
For $\Rq_{r/s}(q)$ and $N\ge2$ the same argument applied to $Sh$ gives $\Gamma(N)\subset S\langle R\rangle N_dS^{-1}$, which again contradicts Corollary~\ref{body:cor:C}~(2).
Here $gSh(\infty)\neq\infty$, since the first column $\pm(-d,1)\trp$ of $Sh$ is not congruent to $\pm(1,0)\trp$ modulo $N$.
The case $N=1$ is excluded by $\Rq_{d/1}=[d]_q$ and $\Rq_{1/1}=1$.
\end{remark}

\subsection{The Euclidean case}\label{sec:six}

Unlike the cases $d\le5$, the divisibility by $\Phi_6$ is not a congruence condition (Remark~\ref{rem:noncongruence}), and unlike the cases $d\ge7$, the group $\rho_\zeta(\Gamma)$ for $d=6$ fixes a point of $\PP^1(\C)$ and acts by affine transformations (Lemma~\ref{lem:basic} and Proposition~\ref{prop:euclid}), which makes the following explicit formula possible.

\restate{theorem}{thm:D}
\begin{proof}
Let $v=(\zeta,1)\trp$.
Since $\zeta^2-\zeta+1=0$ and $\zeta^3=-1$, we have $R_\zeta v=v$ and $S_\zeta v=\zeta v$.
Let $M=M_\zeta(c_1,\dots,c_k)$ be the specialization of $M_q(c_1,\dots,c_k)$ at $q=\zeta$.
Then $Mv=\zeta^kv$ and $\det M=\zeta^{C_k-k}$.
Put $e_1=(1,0)\trp$.
By definition $Me_1=(\Rq_{r/s}(\zeta),\Sq_{r/s}(\zeta))\trp$, and therefore
\begin{equation}\label{eq:unit}
\Rq_{r/s}(\zeta)-\zeta\,\Sq_{r/s}(\zeta)=\det(Me_1,v)=\zeta^{-k}\det(Me_1,Mv)=\zeta^{-k}\det M\,\det(e_1,v)=\zeta^{C_k-2k}.
\end{equation}
In particular the left-hand side of \eqref{eq:unit} is nonzero.
Consequently $\lambda=\Sq_{r/s}(\zeta)/(\Rq_{r/s}(\zeta)-\zeta\Sq_{r/s}(\zeta))$ is the value of the coordinate $w=1/(z-\zeta)$ at the point $\rho_\zeta(\gamma)(\infty)=[\Rq_{r/s}(\zeta):\Sq_{r/s}(\zeta)]$, where $\gamma=R^{c_1}S\cdots R^{c_k}S$.
By \eqref{eq:affine} the transformation $\rho_\zeta(R^{c_i}S)$ is $f_i(w)=-\zeta^{-c_i}w-\zeta^{-c_i-1}$, and $\infty$ corresponds to $w=0$.
Thus
\[
\lambda=f_1\circ\dots\circ f_k(0)=\sum_{i=1}^k\Bigl(\prod_{j<i}\bigl(-\zeta^{-c_j}\bigr)\Bigr)\bigl(-\zeta^{-c_i-1}\bigr)=\zeta^{-1}\sum_{i=1}^k(-1)^i\zeta^{-C_i}.
\]
Combined with \eqref{eq:unit}, this yields $\Sq_{r/s}(\zeta)=\zeta^{C_k-2k}\lambda=\zeta^{C_k-2k-1}\sum_{i=1}^k(-1)^i\zeta^{-C_i}$ and
\[
\Rq_{r/s}(\zeta)=\zeta\,\Sq_{r/s}(\zeta)+\zeta^{C_k-2k}=\zeta^{C_k-2k}\sum_{i=0}^k(-1)^i\zeta^{-C_i}.\qedhere
\]
\end{proof}

Since $\zeta^2=\zeta-1$ and $\zeta^3=-1$, the identity \eqref{eq:unit} can be written as $\Sq_{r/s}(\zeta)+(\zeta-1)\Rq_{r/s}(\zeta)=\zeta^{C_k-2k+2}$.
This refines the statement of \cite[Lemma 4.2 (1)]{JPT26b}, reproduced in \cite[Lemma 4.4]{JPT26}, that the left-hand side is a sixth root of unity.
Related formulas at a primitive sixth root of unity appear in \cite[Lemma 2.1]{LLS23}.
A common eigenvector of $R_\zeta$ and $S_\zeta$, proportional to $v$, was also used in the proof of \cite[Proposition 3.2]{FK14} to triangularize the Burau representation of the braid group on three strands at $-\zeta$, and in the proof of \cite[Theorem 5.1]{BRY26} to determine the traces of the matrices in the group generated by $R_\zeta$ and $S_\zeta$.

\begin{corollary}\label{cor:six}
Let $\zeta$ be a primitive sixth root of unity, let $r/s=\hj{c_1,\dots,c_k}>1$ be an irreducible fraction, and put $C_0=0$ and $C_i=c_1+\dots+c_i$ for $1\le i\le k$.
Then $\Phi_6(q)$ divides $\Rq_{r/s}(q)$ if and only if $\sum_{i=0}^k(-1)^i\zeta^{C_i}=0$, and $\Phi_6(q)$ divides $\Sq_{r/s}(q)$ if and only if $\sum_{i=1}^k(-1)^i\zeta^{C_i}=0$.
\end{corollary}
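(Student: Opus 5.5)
Corollary~\ref{cor:six} follows directly from Theorem~\ref{body:thm:D} together with Lemma~\ref{lem:zero}. By Lemma~\ref{lem:zero}, $\Phi_6(q)$ divides $\Sq_{r/s}(q)$ if and only if $\Sq_{r/s}(\zeta)=0$ for one primitive sixth root of unity $\zeta$. Since $\Sq_{r/s}$ has integer coefficients and $\Phi_6$ is irreducible, this is equivalent to $\Sq_{r/s}(\zeta^{-1})=0$, because $\zeta^{-1}$ is the other primitive sixth root of unity. The same holds for $\Rq_{r/s}$.

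We apply Theorem~\ref{body:thm:D} at the primitive sixth root of unity $\zeta^{-1}$ rather than at $\zeta$. This gives
\[
\Sq_{r/s}(\zeta^{-1})=\zeta^{-(C_k-2k-1)}\sum_{i=1}^k(-1)^i\zeta^{C_i},\qquad
\Rq_{r/s}(\zeta^{-1})=\zeta^{-(C_k-2k)}\sum_{i=0}^k(-1)^i\zeta^{C_i}.
\]
The prefactors are roots of unity and hence nonzero. Therefore $\Sq_{r/s}(\zeta^{-1})=0$ if and only if $\sum_{i=1}^k(-1)^i\zeta^{C_i}=0$, and $\Rq_{r/s}(\zeta^{-1})=0$ if and only if $\sum_{i=0}^k(-1)^i\zeta^{C_i}=0$. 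Combined with the first step, these are exactly the two claims.

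Alternatively, one can apply Theorem~\ref{body:thm:D} at $\zeta$ itself and take complex conjugates. The sums there involve $\zeta^{-C_i}=\overline{\zeta^{C_i}}$, and a complex number vanishes if and only if its conjugate does. The only point requiring care is that Theorem~\ref{body:thm:D} holds for every primitive sixth root of unity, so substituting $\zeta^{-1}$ for $\zeta$ is legitimate; the statement of that theorem allows this.
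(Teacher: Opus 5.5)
Your proposal is correct and is essentially the paper's argument. The paper applies Theorem~\ref{body:thm:D} at $\zeta$ and passes to complex conjugates via $\zeta^{-1}=\bar\zeta$, which is equivalent to your evaluation at the primitive sixth root $\zeta^{-1}$. It then concludes with Lemma~\ref{lem:zero}, as you do.
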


\begin{proof}
By Theorem~\ref{body:thm:D}, $\Rq_{r/s}(\zeta)=0$ if and only if $\sum_{i=0}^k(-1)^i\zeta^{-C_i}=0$, and $\Sq_{r/s}(\zeta)=0$ if and only if $\sum_{i=1}^k(-1)^i\zeta^{-C_i}=0$.
Since $\zeta^{-1}=\bar\zeta$, complex conjugation shows that these sums vanish if and only if the sums in the statement vanish.
The assertion now follows from Lemma~\ref{lem:zero}.
\end{proof}

For an arbitrary irreducible fraction $r/s$ with $s>0$, the criterion of Corollary~\ref{cor:six} for $\Sq_{r/s}$ can be applied to $r/s+n$ with $n\in\Z$ such that $r/s+n>1$, since $\Sq_{r/s+n}=\Sq_{r/s}$.

\section{Irreducibility of the denominators}\label{sec:irreducibility}

For $a\ge2$ let $D_a=\{d\ge2\mid d\text{ divides }a-1\text{ or }a+1\}$, and for $n$ prime to $a$ put $C_{a,n}=\prod_{d\in D_a,\,d\mid n}\Phi_d$.
For $r/s>1$ the polynomials $\Rq_{r/s}$ and $\Sq_{r/s}$ of Section~\ref{sec:prelim} coincide with those of \cite{KMRWY25}.
Recall that $R_q^j=\sm{q^j}{[j]_q}01$ for $j\ge1$.

\subsection{Two families of polynomials}\label{sec:families}

For $a\ge2$ we put
\[
J_a^+(q)=1+q^2+q^3+\dots+q^a,\quad J_a^-(q)=1+q+\dots+q^{a-2}+q^a.
\]
Both have exactly $a$ nonzero coefficients, all equal to $1$, and $(J_a^+)^\vee=J_a^-$.
For $a=2$ we have $J_2^+=J_2^-=1+q^2$.

\begin{lemma}\label{lem:identity}
Let $a\ge2$ and $N\ge1$.
Let $n=a(N+1)-1$ and $J=J_a^+$, or let $n=aN+1$ and $J=J_a^-$.
Then
\[
(q-1)\Rq_{n/a}(q)=q^NJ(q)-[a]_q\quad\text{and}\quad\Sq_{a/n}(q)=\Rq_{n/a}(q)^\vee.
\]
\end{lemma}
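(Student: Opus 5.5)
The plan is to compute $M_q$ directly from the Hirzebruch--Jung expansion of $n/a$, and then to obtain $\Sq_{a/n}$ from $\Rq_{n/a}$ by a $q\mapsto q^{-1}$ symmetry.

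\emph{Case $n=a(N+1)-1$.} Here $n/a=(N+1)-1/a=\hj{N+1,a}$, so $\Rq_{n/a}$ and $\Sq_{n/a}$ form the first column of $R_q^{N+1}S_qR_q^aS_q$. Applying the matrices in turn to $e_1=(1,0)\trp$, using $R_q^j=\sm{q^j}{[j]_q}01$, gives
\[
e_1\mapsto(0,1)\trp\mapsto([a]_q,1)\trp\mapsto(-q^{-1},[a]_q)\trp\mapsto([N+1]_q[a]_q-q^N,[a]_q)\trp .
\]
Hence $\Rq_{n/a}=[N+1]_q[a]_q-q^N$. Then
\[
(q-1)\Rq_{n/a}=(q^{N+1}-1)[a]_q-q^{N+1}+q^N=q^N\bigl(q[a]_q-q+1\bigr)-[a]_q,
\]
and $q[a]_q-q+1=J_a^+$.

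\emph{Case $n=aN+1$.} Here $n/a=(N+1)-(a-1)/a$ and $a/(a-1)=\hj{2,\dots,2}$ with $a-1$ entries. So we need the first column of $R_q^{N+1}S_q(R_q^2S_q)^{a-1}$. Since $R_q^2S_q(x,y)\trp=((1+q)x-qy,\,x)\trp$, induction gives $(R_q^2S_q)^je_1=([j+1]_q,[j]_q)\trp$; the inductive step is $(1+q)[j+1]_q-q[j]_q=[j+2]_q$. Applying $S_q$ and then $R_q^{N+1}$ yields
\[
\Rq_{n/a}=[N+1]_q[a]_q-q^N[a-1]_q,\qquad \Sq_{n/a}=[a]_q .
\]
Therefore $(q-1)\Rq_{n/a}=q^N\bigl(q[a]_q+1-q^{a-1}\bigr)-[a]_q$, and $q[a]_q+1-q^{a-1}=J_a^-$. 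In both cases this proves the first identity.

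\emph{The identity $\Sq_{a/n}=\Rq_{n/a}^\vee$.} I will first establish the mirror formula $[-x]_q=-q^{-1}[x]_{q^{-1}}$ on $\PP^1(\Q)$. The route is to check that $x\mapsto-x$ conjugates $R$ to $R^{-1}$ and $S$ to $S$ in $\Gamma$. Both sides of the formula are compatible with these actions under the rules $[x+1]_q=q[x]_q+1$ and $[Sx]_q=-q^{-1}/[x]_q$. Concretely, one verifies that $z\mapsto-q^{-1}z$ intertwines the M\"obius action of $R_{q^{-1}}$ with that of $R_q^{-1}$, and the action of $S_{q^{-1}}$ with that of $S_q$. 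Together with $[0]_q=0$ and $[\infty]_q=\infty$, this gives the formula by induction along a word. (Alternatively I would cite the known formula from \cite{MO20} or \cite{BRY26}.)

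It then follows that
\[
[1/x]_q=[S(-x)]_q=\frac{-q^{-1}}{-q^{-1}[x]_{q^{-1}}}=\frac1{[x]_{q^{-1}}} .
\]
Taking $x=n/a$ gives $[a/n]_q=\Sq_{n/a}(q^{-1})/\Rq_{n/a}(q^{-1})$. Now multiply numerator and denominator by $q^{\deg\Rq_{n/a}}$. The new denominator is $\Rq_{n/a}^\vee$. It lies in $\Z[q]$ and has constant term $1$, because the first identity shows $\Rq_{n/a}$ is monic: $(q-1)\Rq_{n/a}$ has leading term $q^{N+a}$. Coprimality in $\Z[q^{\pm1}]$ is preserved under $q\mapsto q^{-1}$. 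By the uniqueness of the normalized pair, $\Sq_{a/n}=\Rq_{n/a}^\vee$.

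The main obstacle I expect is this last step, namely verifying the mirror formula cleanly with correct signs and powers of $q$. The matrix computations in the first part are routine once the Hirzebruch--Jung expansions are written down.
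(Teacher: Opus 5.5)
Your proposal is correct. For the first identity you follow the paper exactly: the same expansions $\hj{N+1,a}$ and $\hj{N+1,2,\dots,2}$, the same induction $(R_q^2S_q)^je_1=([j+1]_q,[j]_q)\trp$, and the same simplification. The paper only packages both cases at once via the first column $(R',S')\trp$ of the tail.

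The routes differ for $\Sq_{a/n}=\Rq_{n/a}^\vee$. The paper does not prove this. It cites \cite[Proposition 5.3 (1)]{KMRWY25} for $a/n\in(0,1)$ and notes that the normalizations agree since both satisfy $\Sq_{r/s+1}=\Sq_{r/s}$. You instead derive it from $[-x]_q=-q^{-1}[x]_{q^{-1}}$ and $[1/x]_q=1/[x]_{q^{-1}}$. Your intertwining checks for $z\mapsto-q^{-1}z$ (against $R_{q^{-1}}$ versus $R_q^{-1}$, and $S_{q^{-1}}$ versus $S_q$) are correct, and so are the signs.

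One point should be made explicit. The rule $[Sx]_q=-q^{-1}/[x]_q$ is not among the defining rules \eqref{eq:shift} in this paper. It does follow from Lemma~\ref{lem:column}, which gives $[\gamma(\infty)]_q=\rho(\gamma)(\infty)$ and hence $[\gamma x]_q=\rho(\gamma)([x]_q)$ for all $\gamma\in\Gamma$ and $x\in\PP^1(\Q)$.

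With that, your version is self-contained within the paper and makes the normalization explicit. Monicity of $\Rq_{n/a}$, read off from the leading term $q^{N+a}$ of $q^NJ-[a]_q$, gives $\Rq_{n/a}^\vee(0)=1$, and uniqueness of the coprime normalized pair finishes. It also yields the more general statement $\Sq_{1/x}=\Rq_x^\vee$ whenever $\Rq_x$ is monic. The paper's citation is shorter, but it outsources both the inversion formula and this normalization check.
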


\begin{proof}
If $n=a(N+1)-1$, then $n/a=(N+1)-1/a=\hj{N+1,a}$.
If $n=aN+1$, then $n/a=(N+1)-(a-1)/a=\hj{N+1,2,\dots,2}$ with $a-1$ entries $2$, since $a/(a-1)=\hj{2,\dots,2}$.
The first column of $R_q^aS_q$ is $([a]_q,1)\trp$.
Moreover the first column of the product of $k$ factors $R_q^2S_q$ is $([k+1]_q,[k]_q)\trp$.
Indeed, this holds for $k=1$, and $R_q^2S_q([k+1]_q,[k]_q)\trp=([2]_q[k+1]_q-q[k]_q,[k+1]_q)\trp$ with $[2]_q[k+1]_q-q[k]_q=[k+2]_q$.
Let $T$ be $R_q^aS_q$ in the first case and the product of $a-1$ factors $R_q^2S_q$ in the second case, and let $(R',S')\trp$ be the first column of $T$.
Thus $(R',S')$ is $([a]_q,1)$ in the first case and $([a]_q,[a-1]_q)$ in the second case.
The first column of $R_q^{N+1}S_qT$ is
\[
R_q^{N+1}S_q\begin{pmatrix}R'\\S'\end{pmatrix}=R_q^{N+1}\begin{pmatrix}-q^{-1}S'\\R'\end{pmatrix}=\begin{pmatrix}[N+1]_qR'-q^NS'\\R'\end{pmatrix}.
\]
Since $(q-1)[N+1]_q=q^{N+1}-1$, this yields
\[
(q-1)\Rq_{n/a}(q)=q^N\bigl(q[a]_q+(1-q)S'\bigr)-[a]_q.
\]
In the first case $q[a]_q+1-q=J_a^+(q)$, and in the second case $q[a]_q+(1-q)[a-1]_q=q[a]_q+1-q^{a-1}=J_a^-(q)$.
The second equality $\Sq_{a/n}=\Rq_{n/a}^\vee$ is \cite[Proposition 5.3 (1)]{KMRWY25} applied to $a/n\in(0,1)$, since both normalizations satisfy $\Sq_{r/s+1}=\Sq_{r/s}$.
\end{proof}

In the rest of this subsection $a\ge2$, $N\ge1$, and $n$, $J$ are as in Lemma~\ref{lem:identity}.
We put
\[
F(q)=q^NJ(q)-[a]_q.
\]
Then $\deg F=N+a$ and $F(0)=-1$.

\begin{lemma}\label{lem:cyclotomic}
Suppose that $n\neq a+1$ and $n\neq a^2-1$.
\begin{enumerate}
\item We have $F+F^\vee=(1-q)(q^N-q^{a-1})$ if $J=J_a^+$, and $F+F^\vee=(1-q^{a-1})(q^N-q)$ if $J=J_a^-$.
These polynomials are nonzero.
\item Every common irreducible factor of $F$ and $F^\vee$ is cyclotomic.
\item The product of the cyclotomic irreducible factors of $F$, counted with multiplicity, is $(q-1)\prod_{d\in D_a,\,d\mid n}\Phi_d$ up to sign.
\end{enumerate}
\end{lemma}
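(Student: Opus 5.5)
The plan is to compute $F^\vee$ explicitly, use the sum $F+F^\vee$ to trap all common and all cyclotomic factors, and then evaluate $F$ at roots of unity to pin down exactly which cyclotomic factors occur and with what multiplicity.

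\emph{Step (1).} Since $\deg F=N+a$ and $[a]_q(q^{-1})=q^{1-a}[a]_q$, we get
\[
F^\vee(q)=q^{N+a}F(q^{-1})=J^\vee(q)-q^{N+1}[a]_q .
\]
Using $(J_a^\pm)^\vee=J_a^\mp$, we group terms as
\[
F+F^\vee=q^N\bigl(J-q[a]_q\bigr)+\bigl(J^\vee-[a]_q\bigr).
\]
For $J=J_a^+$ we have $J_a^+-q[a]_q=1-q$ and $J_a^--[a]_q=q^a-q^{a-1}$. Hence $F+F^\vee=(1-q)(q^N-q^{a-1})$, which vanishes only if $N=a-1$, that is, $n=a^2-1$.

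For $J=J_a^-$ we have $J_a^--q[a]_q=1-q^{a-1}$ and $J_a^+-[a]_q=q^a-q$. Hence $F+F^\vee=(1-q^{a-1})(q^N-q)$. This vanishes only if $N=1$ (so $n=a+1$) or $a=2$. When $a=2$ we have $J_2^+=J_2^-$, so the first formula with $N\ne1$ applies instead.

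\emph{Step (2).} Let $g$ be a common irreducible factor of $F$ and $F^\vee$. Then $g$ divides $F+F^\vee$, which by (1) is a nonzero product of a power of $q$ and factors $q^m-1$. Since $F(0)=-1$, $g\neq q$, so $g$ is cyclotomic.

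\emph{Step (3), which factors occur.} Let $d\ge2$. Because $\Phi_d$ is self-reciprocal, $\Phi_d\mid F$ implies $\Phi_d\mid F^\vee$. We also have $F=(q-1)\Rq_{n/a}$ and $\Sq_{a/n}=\Rq_{n/a}^\vee$ by Lemma~\ref{lem:identity}, so $\Phi_d\mid F$ if and only if $\Phi_d\mid\Sq_{a/n}$. By Corollary~\mainref{cor:B} this forces $d\mid n$ and $a\equiv\pm1\pmod d$, i.e.\ $d\in D_a$.

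For the converse, suppose $d\in D_a$ and $d\mid n$, and let $\zeta=\zeta_d$. I evaluate $F(\zeta)=\zeta^NJ(\zeta)-[a]_\zeta$ directly, using $a\equiv\pm1\pmod d$ and the congruence on $N$ forced by $d\mid n$:
\begin{itemize}
\item If $J=J_a^+$, then $n=a(N+1)-1$.
\item If $J=J_a^-$, then $n=aN+1$.
\end{itemize}
In each case $a\equiv\pm1$ determines $N\bmod d$, and $J(\zeta)$ and $[a]_\zeta$ reduce to short expressions in $\zeta$; the identity $F(\zeta)=0$ is then checked case by case. For $\Phi_1$: $F(1)=0$ since $J(1)=a$, and the multiplicity is exactly one because $\Rq_{n/a}(1)=n\neq0$.

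\emph{Step (3), multiplicities.} If $\Phi_d^2\mid F$, then $\Phi_d^2\mid F^\vee$ as well, hence $\Phi_d^2\mid F+F^\vee$.
\begin{itemize}
\item In the $J_a^+$ case, $F+F^\vee=\pm q^{\min}(1-q)(q^{|N-a+1|}-1)$. Its only repeated cyclotomic factor is $\Phi_1$, so $\Phi_d^2\nmid F$ for $d\ge2$.
\item In the $J_a^-$ case, $F+F^\vee=\pm q(1-q^{a-1})(q^{N-1}-1)$, and a square of $\Phi_d$ can occur when $d\mid a-1$ and $d\mid N-1$.
\end{itemize}
This overlap is the main obstacle. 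There I will show directly that $F'(\zeta)\neq0$, where
\[
F'=Nq^{N-1}J+q^NJ'-[a]_q' .
\]
With $a\equiv1$ and $N\equiv1\pmod d$, the values $J(\zeta)$, $J'(\zeta)$ and $[a]_\zeta'$ become explicit, and $F'(\zeta)$ should reduce to a nonzero expression linear in $N$, such as a multiple of $(N-1)/d+\text{const}$ times $(\zeta-1)^{-1}$. It remains to verify that this cannot vanish.
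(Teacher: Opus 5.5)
Your outline is sound. Parts (1) and (2), the treatment of $\Phi_1$, and the multiplicity argument for $J=J_a^+$ match the paper.

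\textbf{Which cyclotomic roots occur.} Here your route differs. You get necessity of $d\in D_a$, $d\mid n$ from Corollary~\ref{cor:B} via $\Sq_{a/n}=\Rq_{n/a}^\vee$, and sufficiency by evaluating $F(\zeta)$. This is legitimate: Corollary~\ref{cor:B} does not depend on this lemma, and $\gcd(a,n)=1$ here. Your unexecuted case check also goes through; for example, for $J=J_a^+$ and $a\equiv1\pmod d$ one gets $N\equiv0$ and $J(\zeta)=[a]_\zeta=1$. But it imports the triangle-group machinery behind Theorem~\ref{thm:A}.

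The paper instead reads both directions off part (1). A root of unity $\zeta\neq1$ of $F$ is a root of $F+F^\vee$, so $\zeta^N=\zeta^{a-1}$ if $J=J_a^+$. On that locus the identity $q^{a-1}J_a^+-[a]_q=[a-1]_q(q^{a+1}-1)$ gives $F(\zeta)=[a-1]_\zeta(\zeta^{a+1}-1)$. If $J=J_a^-$, then $\zeta^{N-1}=1$ or $\zeta^{a-1}=1$, giving $F(\zeta)=\zeta^{a+1}-1$ or $F(\zeta)=\zeta^{N+1}-1$ respectively. This is elementary and keeps Section~\ref{sec:irreducibility} independent of Section~\ref{sec:cyclotomic}.

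\textbf{The overlap case.} The step you leave open is the overlap $d\mid a-1$, $d\mid N-1$ for $J=J_a^-$. This is a genuine hole in the write-up, and the form you predict for $F'(\zeta)$ is not right. The missing observation is that in the overlap $n=aN+1\equiv2\pmod d$. So by your own classification, $\Phi_d\mid F$ only for $d=2$. Only $\zeta=-1$ with $a$ and $N$ odd matters, and a direct computation gives $F'(-1)=-(N+a)\neq0$; this is exactly how the paper finishes.

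Your general plan also works. Write the derivative of $[m]_q$ at $\zeta$ as $(m\zeta^{m-1}-[m]_\zeta)/(\zeta-1)$. Then the terms in $(\zeta-1)^{-1}$ cancel, and $F'(\zeta)=(N+a)\zeta$ whenever $\zeta\neq1$ and $\zeta^{a-1}=\zeta^{N-1}=1$.

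\textbf{Minor point.} In (1), the factor $1-q^{a-1}$ never vanishes for $a\ge2$. So your exception ``or $a=2$'' is spurious, though harmless.
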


\begin{proof}
(1) Since $[a]_q$ is palindromic, we have $F^\vee=J^\vee-q^{N+1}[a]_q$ and $F+F^\vee=q^N(J-q[a]_q)+(J^\vee-[a]_q)$.
For $J=J_a^+$ we have $J-q[a]_q=1-q$ and $J^\vee-[a]_q=J_a^--[a]_q=q^a-q^{a-1}$.
For $J=J_a^-$ we have $J-q[a]_q=1-q^{a-1}$ and $J^\vee-[a]_q=J_a^+-[a]_q=q^a-q$.
The first polynomial vanishes only for $N=a-1$, that is, for $n=a^2-1$, and the second vanishes only for $N=1$, that is, for $n=a+1$.
Thus both are nonzero under our assumption.

(2) A common irreducible factor of $F$ and $F^\vee$ divides $F+F^\vee$.
By (1) it is therefore $q$ or cyclotomic, and $q$ does not divide $F$.

(3) Let $\zeta$ be a root of unity with $F(\zeta)=0$.
Since $F$ has real coefficients and $\bar\zeta=\zeta^{-1}$, we also have $F^\vee(\zeta)=0$, and $\zeta$ is a root of $F+F^\vee$.

Let $J=J_a^+$, and let $\zeta\neq1$ be a root of unity of order $d$.
If $F(\zeta)=0$, then $\zeta^N=\zeta^{a-1}$ by (1).
Conversely, if $\zeta^N=\zeta^{a-1}$, then the identity $q^{a-1}J_a^+-[a]_q=[a-1]_q(q^{a+1}-1)$ yields $F(\zeta)=[a-1]_\zeta(\zeta^{a+1}-1)$.
This implies that $F(\zeta)=0$ if and only if $\zeta^N=\zeta^{a-1}$ and $[a-1]_\zeta(\zeta^{a+1}-1)=0$, that is, if and only if $d\mid N-a+1$ and $d\in D_a$.
Since $a(N-a+1)=n+1-a^2$ and $\gcd(a,d)=1$ for $d\in D_a$, the condition $d\mid N-a+1$ is equivalent to $d\mid n$ for $d\in D_a$.

Let $J=J_a^-$, and let $\zeta\neq1$ be a root of unity of order $d$.
If $F(\zeta)=0$, then $\zeta^{N-1}=1$ or $\zeta^{a-1}=1$ by (1).
If $\zeta^{N-1}=1$, then the identity $qJ_a^--[a]_q=q^{a+1}-1$ yields $F(\zeta)=\zeta^{a+1}-1$.
If $\zeta^{a-1}=1$, then $J_a^-(\zeta)=\zeta$ and $[a]_\zeta=1$, which gives $F(\zeta)=\zeta^{N+1}-1$.
It follows that $F(\zeta)=0$ if and only if either $d\mid a+1$ and $d\mid N-1$, or $d\mid a-1$ and $d\mid N+1$.
Indeed, in each of these two cases one of the two formulas above shows that $F(\zeta)=0$.
Since $n=a(N-1)+(a+1)=a(N+1)-(a-1)$ and $\gcd(a,d)=1$ for $d\in D_a$, the condition $d\mid N-1$ is equivalent to $d\mid n$ if $d\mid a+1$, and the condition $d\mid N+1$ is equivalent to $d\mid n$ if $d\mid a-1$.
Therefore $F(\zeta)=0$ if and only if $d\in D_a$ and $d\mid n$.

It remains to show that these roots of $F$ are simple.
The root $1$ is simple, since $F'(1)=\Rq_{n/a}(1)=n$.
A multiple root $\zeta\neq1$ of $F$ is a multiple root of $F^\vee$, since $\bar\zeta=\zeta^{-1}$ is a root of $F$ of the same multiplicity.
Thus it is a multiple root of $F+F^\vee$.
By (1) the roots of unity different from $1$ are simple roots of $F+F^\vee$, except in the case $J=J_a^-$ for the common roots of $q^{a-1}-1$ and $q^{N-1}-1$.
Such a root $\zeta\neq1$ of $F$ satisfies $\zeta^{N+1}=1$ and $\zeta^{N-1}=1$, whence $\zeta=-1$, and $a$ and $N$ are odd.
In this case a direct computation yields $F'(-1)=-(N+a)\neq0$.
\end{proof}

\subsection{Ljunggren's method and irreducibility}\label{sec:irred}

The method of Ljunggren \cite{Lju60} studies a factorization $F=gh$ through the polynomial $G=gh^\vee$, which satisfies $GG^\vee=FF^\vee$.

For a polynomial $f$ of degree at most $2a-1$ we put $\widetilde f(q)=q^{2a-1}f(q^{-1})$.

\begin{lemma}\label{lem:blocks}
Let $a\ge2$, $N\ge5a-2$ and $F=q^NJ-[a]_q$ with $J\in\{J_a^+,J_a^-\}$.
Let $G\in\Z[q]$ satisfy $\deg G=N+a$, $G(0)\neq0$ and $GG^\vee=FF^\vee$.
Then there are polynomials $A,H\in\Z[q]$ of degree at most $2a-1$ such that $G=A+q^{N-a+1}H$, both $A$ and $H$ have exactly $a$ nonzero coefficients, all equal to $\pm1$, $A(0)\neq0$, $\deg H=2a-1$, and
\[
A\,\widetilde H=-[a]_q\,J^\vee.
\]
\end{lemma}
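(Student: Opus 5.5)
The plan is to compare coefficients of $GG^\vee=FF^\vee$, exploiting the large gap in $F$. First expand
\[
FF^\vee=(q^NJ-[a]_q)(J^\vee-q^{N+1}[a]_q)=-[a]_qJ^\vee+q^N\bigl(JJ^\vee+q[a]_q^2\bigr)-q^{2N+1}[a]_qJ.
\]
These three blocks occupy degrees $[0,2a-1]$, $[N,N+2a]$ and $[2N+1,2N+2a]$. Since $N\ge5a-2$, the blocks are disjoint and separated by long zero stretches.

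Writing $GG^\vee=q^{N+a}\sum_k c_kq^k$ with $c_k=\sum_i g_ig_{i+k}$ the autocorrelation of the coefficient vector of $G$, we get $c_k=0$ for $a<|k|<N-a+1$. Reading off $c_0$ gives $\sum g_i^2=\|F\|^2=2a$, so $G$ has at most $2a$ nonzero terms. Its exponents satisfy $0=e_1<\dots<e_m=N+a$ with $m\le2a$, because $G(0)\neq0$ and $\deg G=N+a$.

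The main step, and the one I expect to be the obstacle, is to show that every exponent lies in $[0,2a-1]\cup[N-a+1,N+a]$. Once this holds, we can write $G=A+q^{N-a+1}H$ with $\deg A,\deg H\le2a-1$. The danger is that vanishing of $c_k$ only says the pair products cancel, not that no pair has difference $k$.

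I would first try an extremal-pair argument. Group the exponents into clusters by cutting at gaps larger than $a$; there are at most $2a$ exponents, hence at most $2a$ clusters. Take the cluster $L$ containing $0$ and the cluster $U$ containing $N+a$. Suppose some exponent lies outside $L\cup U$, and let $e$ be the smallest such exponent. Consider the difference $N+a-e$ of $e$ from the top exponent, or the difference $e-\max L$ when that is the relevant one. If $e$ is chosen extremal among outside exponents, this difference should be realized by a single pair, so the corresponding $c_k$ would be a single nonzero product. That is impossible when the difference lies in the forbidden range $(a,N-a+1)$.

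To guarantee the difference lands in this range, I would use a counting bound. Each cluster has width at most $a(m-1)\le a(2a-1)$ in the crude estimate, but more carefully at most $a$ times the number of its gaps. The total number of gaps is at most $2a-1$, and summing these widths against $N\ge5a-2$ should force $L\subset[0,2a-1]$ and $U\subset[N-a+1,N+a]$ and exclude any intermediate cluster. If the single-pair argument fails because of coincidences, the fallback is to pick $k$ as the largest difference between an outside exponent and $L$. Then the products contributing to $c_k$ all involve the extreme elements, and their number is bounded by the cluster sizes. Comparing the sums of $|g_ig_j|$ with $\sum g_i^2=2a$ should then rule out full cancellation.

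Once the decomposition holds, the statement follows in four steps.
\begin{itemize}
\item The low block of $GG^\vee$ consists of exactly the cross terms between $A$ and the reverse of $q^{N-a+1}H$, which gives $A\,\widetilde H=-[a]_qJ^\vee$.
\item The properties $A(0)=G(0)\neq0$ and $\deg H=\deg G-(N-a+1)=2a-1$ are immediate.
\item Evaluating at $q=1$ gives $|A(1)\widetilde H(1)|=a\cdot a=a^2$. By Cauchy--Schwarz for integer coefficients, $|A(1)|^2\le t_A\|A\|^2\le\|A\|^4$, where $t_A$ is the number of terms of $A$, and similarly for $H$. Hence $\|A\|^2\|H\|^2\ge a^2$.
\item Since $\|A\|^2+\|H\|^2=2a$, we must have $\|A\|^2=\|H\|^2=a$, with equality throughout. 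Equality forces every nonzero coefficient to be $\pm1$ and exactly $a$ of them in each of $A$ and $H$.
\end{itemize}
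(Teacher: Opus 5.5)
There is a genuine gap, and it is the step you yourself flag as the obstacle: you never prove that every exponent of $G$ lies in $[0,2a-1]\cup[N-a+1,N+a]$, and the tools you sketch do not prove it. The vanishing $c_k=0$ for $a<|k|<N-a+1$ only says that products cancel. Your extremal choice does not make the relevant shift unique. The difference $N+a-e$ is realized by $(e,N+a)$, but also by every pair $(i,i+N+a-e)$ with $i$ in the cluster $L$, so cancellation is not excluded. The counting bound is also too weak. $L$ contains at most $2a-1$ exponents, hence at most $2a-2$ internal gaps of size $\le a$, so its width is bounded only by $a(2a-2)$. This already exceeds $2a-1$ for $a=2$, and it grows quadratically in $a$ while $N\ge5a-2$ is linear. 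So nothing forces $L\subset[0,2a-1]$. The ``fallback'' is not an argument.

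The missing idea is to use the \emph{high} block of $FF^\vee$ quantitatively, not just the zero middle range. For $t\ge N-a+1$, the only pairs of nonzero coefficients of $F$ with difference $t$ are the $a^2$ pairs between $-[a]_q$ and $q^NJ$. All of them have product $-1$, so $\sum_{t\ge N-a+1}|c_t(F)|=a^2$ with no cancellation. For $G$, any pair with $g_ig_{i+t}\neq0$ and $t\ge N-a+1$ automatically satisfies $i\le N+a-t\le2a-1$ and $i+t\ge N-a+1$. This follows from $\deg G=N+a$ alone, with no prior knowledge of the support. Let $\sigma_L,\sigma_H$ be the sums of $|g_i|$ over $I_L=[0,2a-1]$ and $I_H=[N-a+1,N+a]$. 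Then
\[
a^2=\sum_{t\ge N-a+1}|c_t(G)|\le\sigma_L\sigma_H\le\Bigl(\frac{\sigma_L+\sigma_H}{2}\Bigr)^2\le\Bigl(\frac12\sum_i|g_i|\Bigr)^2\le\Bigl(\frac12\sum_ig_i^2\Bigr)^2=a^2 .
\]
Equality throughout gives at once that $g_i=0$ off $I_L\cup I_H$, that $g_i\in\{0,\pm1\}$, and that $\sigma_L=\sigma_H=a$. This is how the paper argues.

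Your remaining steps are correct once the decomposition is available. The low-block comparison uses $N\ge5a-2$ to separate the three summands of $GG^\vee$. Your Cauchy--Schwarz and AM--GM argument at $q=1$ is a valid alternative for the $\pm1$ and count statements, and it even shows that the nonzero coefficients within each block share a sign. In the paper, however, those statements come for free from the same chain of equalities that produces the decomposition.
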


\begin{proof}
Write $G=\sum_ig_iq^i$, $F=\sum_if_iq^i$ and $M=N+a$.
For $t\in\Z$ put $c_t(G)=\sum_ig_ig_{i+t}$.
Then $GG^\vee=\sum_tc_t(G)q^{M+t}$, and the same holds for $F$.
The equality $GG^\vee=FF^\vee$ yields $c_t(G)=c_t(F)$ for all $t$.
In particular $\sum_ig_i^2=c_0(G)=c_0(F)=2a$.

The nonzero coefficients of $F$ are $f_i=-1$ for $0\le i\le a-1$ and $f_{N+e}=1$ for the $a$ exponents $e$ of $J$, which lie in $[0,a]$.
Since $N\ge5a-2\ge2a$, a pair $(i,i+t)$ with $f_if_{i+t}\neq0$ and $t\ge N-a+1$ consists of an exponent $i\le a-1$ and an exponent $i+t\ge N$.
There are $a^2$ such pairs, each contributes $-1$ to $c_t(F)$, and their differences $t$ lie in $[N-a+1,N+a]$.
Therefore
\[
\sum_{t\ge N-a+1}|c_t(F)|=a^2.
\]
Let $I_L=[0,2a-1]$ and $I_H=[N-a+1,N+a]$.
These intervals are disjoint, since $N\ge3a-1$.
If $g_ig_{i+t}\neq0$ and $t\ge N-a+1$, then $i\le M-t\le2a-1$ and $i+t\ge N-a+1$, that is, $i\in I_L$ and $i+t\in I_H$.
Put $\sigma_L=\sum_{i\in I_L}|g_i|$ and $\sigma_H=\sum_{i\in I_H}|g_i|$.
It follows that
\[
a^2=\sum_{t\ge N-a+1}|c_t(G)|\le\sigma_L\sigma_H\le\Bigl(\frac{\sigma_L+\sigma_H}{2}\Bigr)^2\le\Bigl(\frac12\sum_i|g_i|\Bigr)^2\le\Bigl(\frac12\sum_ig_i^2\Bigr)^2=a^2.
\]
Therefore all these inequalities are equalities.
This implies that $\sigma_L=\sigma_H=a$, that $g_i\in\{0,\pm1\}$ for all $i$, and that $g_i=0$ for $i\notin I_L\cup I_H$.
Put $A=\sum_{i\in I_L}g_iq^i$ and $H=\sum_{i\in I_H}g_iq^{i-N+a-1}$.
Then $G=A+q^{N-a+1}H$, the degrees of $A$ and $H$ are at most $2a-1$, and both have exactly $a$ nonzero coefficients $\pm1$.
Moreover $A(0)=g_0\neq0$, and $\deg H=2a-1$ since $g_M\neq0$.

Next, $G^\vee=q^{N-a+1}\widetilde A+\widetilde H$, and
\[
GG^\vee=A\widetilde H+q^{N-a+1}\bigl(A\widetilde A+H\widetilde H\bigr)+q^{2(N-a+1)}H\widetilde A.
\]
The three summands have degrees in $[0,4a-2]$, $[N-a+1,N+3a-1]$ and $[2N-2a+2,2N+2a]$ respectively, and these intervals are disjoint since $N\ge5a-2$.
The polynomial $F$ has the same form with $A_F=-[a]_q$ and $H_F=q^{a-1}J$, and $\widetilde{H_F}=J^\vee$.
Comparing the summands of lowest degree in $GG^\vee=FF^\vee$ therefore yields $A\widetilde H=A_F\widetilde{H_F}=-[a]_qJ^\vee$.
\end{proof}

\begin{lemma}\label{lem:divisors}
Let $a\in\{2,3,4,6\}$ and $J\in\{J_a^+,J_a^-\}$.
The only divisors of $[a]_qJ^\vee$ in $\Z[q]$ which have exactly $a$ nonzero coefficients, all equal to $\pm1$, are $\pm[a]_q$ and $\pm J^\vee$.
\end{lemma}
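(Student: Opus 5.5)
The plan is a finite factor-and-enumerate check. Since $\Z[q]$ is a unique factorization domain and $[a]_qJ^\vee$ is primitive with constant term $1$, every divisor in $\Z[q]$ is, up to sign, a product of a sub-multiset of the irreducible factors of $[a]_qJ^\vee$ over $\Q$, normalized to be primitive with positive leading coefficient. So for each of the eight pairs $(a,J)$ I will:
\begin{enumerate}
\item factor $[a]_q$ and $J^\vee$ into irreducibles;
\item list the finitely many monic divisors;
\item count the nonzero coefficients of each divisor and check whether they all lie in $\{\pm1\}$.
\end{enumerate}
Since $(J_a^+)^\vee=J_a^-$ and $(J_a^-)^\vee=J_a^+$, the two choices of $J$ just swap $J_a^+$ and $J_a^-$ in the computation.

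The cyclotomic factors are known: $[a]_q=\prod_{1<d\mid a}\Phi_d$, namely $\Phi_2$, $\Phi_3$, $\Phi_2\Phi_4$ and $\Phi_2\Phi_3\Phi_6$ for $a=2,3,4,6$. For the factors of $J_a^\pm$, I first find which $\Phi_d$ divide them by evaluating at roots of unity. Only $d$ with $\varphi(d)\le a$ can occur, and small cases such as $q=-1$ or $q^2+q+1=0$ reduce $J$ modulo $q^d-1$. The cyclotomic-free part is then handled by a rational root test and, in degree $4$ or $6$, by testing for quadratic or cubic factors. This can be done by comparing coefficients, or by reducing modulo $2$ or $3$ to limit the possible degree patterns of a factorization.

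The cases $a=2,3$ are immediate.
\begin{itemize}
\item For $a=2$, $J=1+q^2=\Phi_4$. The divisors of $\Phi_2\Phi_4$ are $1$, $1+q$, $1+q^2$ and $1+q+q^2+q^3$, and only $1+q$ and $1+q^2$ have two nonzero coefficients.
\item For $a=3$, the cubics $1+q+q^3$ and $1+q^2+q^3$ have no roots $\pm1$, so they are irreducible. The only remaining divisor is the product, for example $(1+q+q^2)(1+q+q^3)=1+2q+2q^2+2q^3+q^4+q^5$, which fails the coefficient condition.
\end{itemize}

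For $a=4,6$ the bookkeeping is longer, and I will prune the enumeration with necessary conditions. A divisor $f$ with $a$ nonzero coefficients $\pm1$ satisfies:
\begin{itemize}
\item $f(1)\equiv a\pmod 2$, and $|f(\pm1)|\le a$;
\item $|f(0)|=1$;
\item $\deg f\le\deg([a]_qJ^\vee)=2a-1$, and $\deg f\ge a-1$, since $f$ has $a$ terms.
\end{itemize}
Evaluating the candidate products at $q=1$ (the value is a product of the values of the factors, e.g.\ $\Phi_2(1)=2$, $\Phi_3(1)=3$, $J(1)=a$) and at $q=-1$ should already rule out most subsets. The survivors get expanded explicitly.

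I expect the main obstacle to be $a=6$. There $[a]_q$ has three cyclotomic factors and $J_6^\pm$ has degree $6$, so I must establish its factorization (irreducible, or a product of small factors) and then examine up to $2^3\cdot 2^{k}$ subsets. I would first determine the factorization of $J_6^\pm$ carefully, using reduction modulo small primes to certify irreducibility of any sextic or cubic pieces. Then I would use the evaluation filters at $q=\pm1$ and at $q=\zeta_3,\zeta_4$ to reduce to a handful of explicit products. If a factorization of $J$ shares a factor with $[a]_q$, multiplicities must be tracked, and the mixed products are where a spurious divisor with $a$ terms $\pm1$ could appear. Those products need the most careful explicit checking.
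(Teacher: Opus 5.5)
Your method is the paper's: factor $[a]_qJ^\vee$ into irreducibles, run through its divisors up to sign, prune with $|A(0)|=1$, $\deg A\ge a-1$, $0<|A(1)|\le a$ and $A(1)\equiv a\pmod 2$, and expand the few survivors. Your treatment of $a=2$ and $a=3$ is correct. The shortcoming is that for $a=4$ and $a=6$, which carry all the content, you describe the computation instead of doing it. You even leave open how $J_6^\pm$ factors, so as written these cases are not yet proved.

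The missing facts are as follows.
\begin{enumerate}
\item A monic quartic in $\Z[q]$ with constant term $1$, no $q^3$ term and no root $\pm1$ can only split as $(q^2+bq+c)(q^2-bq+c)$ with $c=\pm1$, whose $q$-coefficient is $0$. Hence $J_4^-=q^4+q^2+q+1$ and $v=q^4+q+1$ are irreducible, and so are their reversals $J_4^+$ and $q^4+q^3+1$.
\item $J_6^-=\Phi_4v$ and $J_6^+=\Phi_4\,(q^4+q^3+1)$. So $[a]_qJ^\vee$ is squarefree, and your concern about shared factors and multiplicities does not arise.
\item Since the $\Phi_d$ are palindromic and $(J_a^+)^\vee=J_a^-$, it suffices to take $J=J_a^+$, that is, $J^\vee=J_a^-$.
\item For $a=4$, the conditions $|A(1)|\in\{2,4\}$ and $\deg A\ge3$ leave only $\pm[4]_q$ and $\pm J_4^-$.
\item For $a=6$ one has $[6]_qJ_6^-=\Phi_2\Phi_3\Phi_4\Phi_6v$. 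The conditions $|A(1)|\in\{2,4,6\}$ and $\deg A\ge5$ leave, besides $\pm[6]_q$ and $\pm J_6^-$, exactly $\pm$ the products $\Phi_2\Phi_4\Phi_6=1+q^2+q^3+q^5$, $\Phi_3\Phi_4\Phi_6$, $\Phi_2v$, $\Phi_2\Phi_6v$ and $\Phi_4\Phi_6v$. The first has only four terms, and each of the others has a coefficient $2$.
\end{enumerate}
With these checks your argument is complete. The extra filters at $q=-1,\zeta_3,\zeta_4$ and the reductions modulo small primes are not needed.
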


\begin{proof}
We first list the irreducible factors of $[a]_qJ^\vee$.
We have $[a]_q=\prod_{1<d\mid a}\Phi_d$, $J_2^\pm=\Phi_4$ and
\[
J_6^+=\Phi_4\,(q^4+q^3+1),\quad J_6^-=\Phi_4\,(q^4+q+1).
\]
The cubics $J_3^\pm$ are irreducible, since they are monic with constant term $1$ and have no root $\pm1$.
If a monic quartic in $\Z[q]$ with constant term $1$ and without the term $q^3$ is a product of two monic quadratics, then these are $q^2+bq+c$ and $q^2-bq+c$ with $c=\pm1$, and the coefficient of $q$ in the product is $0$.
The quartics $J_4^-=q^4+q^2+q+1$ and $q^4+q+1$ have the coefficient $1$ at $q$ and no root $\pm1$.
Therefore they are irreducible.
Since $J_4^+=(J_4^-)^\vee$ and $q^4+q^3+1=(q^4+q+1)^\vee$, and since reversal preserves irreducibility, these two quartics are irreducible as well.
Consequently every divisor of $[a]_qJ^\vee$ is $\pm$ a product of some of these factors.

The polynomials $\Phi_d$ with $d\ge2$ are palindromic and $(J_a^+)^\vee=J_a^-$.
Thus the divisors for $J=J_a^-$ are the reversals of those for $J=J_a^+$, and reversal preserves the number of nonzero coefficients and their values.
It suffices to treat $J=J_a^+$, that is, $J^\vee=J_a^-$.

Let $A$ be a divisor of $[a]_qJ_a^-$ with exactly $a$ nonzero coefficients, all equal to $\pm1$.
Then $A(0)=\pm1$ and $\deg A\ge a-1$.
Moreover $A(1)\equiv a\pmod2$, and $0<|A(1)|\le a$ since $1$ is not a root of $[a]_qJ_a^-$.
At $q=1$ the factors take the values $\Phi_2(1)=\Phi_4(1)=2$, $\Phi_3(1)=3$, $\Phi_6(1)=1$ and $J_a^-(1)=a$, and $q^4+q+1$ takes the value $3$.

For $a=2$ we need $|A(1)|=2$, which leaves $\pm\Phi_2=\pm[2]_q$ and $\pm\Phi_4=\pm J_2^-$.
For $a=3$ we need $|A(1)|\in\{1,3\}$.
The value $1$ gives $A=\pm1$, which has only one nonzero coefficient, and the value $3$ gives $\pm\Phi_3=\pm[3]_q$ and $\pm J_3^-$.
For $a=4$ we need $|A(1)|\in\{2,4\}$, which leaves $\pm\Phi_2$, $\pm\Phi_4$, $\pm\Phi_2\Phi_4=\pm[4]_q$ and $\pm J_4^-$, and the first two have degree less than $3$.

For $a=6$ put $v=q^4+q+1$.
Then $[6]_qJ_6^-=\Phi_2\Phi_3\Phi_4\Phi_6v$.
Let $i$ be the number of the factors $\Phi_2$ and $\Phi_4$ which occur in $A$, and let $j$ be that of $\Phi_3$ and $v$.
Then $|A(1)|=2^i3^j$ lies in $\{2,4,6\}$, and $(i,j)$ is $(1,0)$, $(2,0)$ or $(1,1)$.
Together with $\deg A\ge5$ this leaves $\pm[6]_q$, $\pm J_6^-$ and $\pm A$ for the five products $A$ in the following table.
\begin{center}
\begin{tabular}{lcl}
\toprule
$A$ & $A(1)$ & expansion\\
\midrule
$\Phi_2\Phi_4\Phi_6$ & $4$ & $1+q^2+q^3+q^5$\\
$\Phi_3\Phi_4\Phi_6$ & $6$ & $1+2q^2+2q^4+q^6$\\
$\Phi_2v$ & $6$ & $1+2q+q^2+q^4+q^5$\\
$\Phi_2\Phi_6v$ & $6$ & $1+q+q^3+2q^4+q^7$\\
$\Phi_4\Phi_6v$ & $6$ & $1+q^2+q^3+q^4+2q^6-q^7+q^8$\\
\bottomrule
\end{tabular}
\end{center}
The first product has only four nonzero coefficients, and each of the others has a coefficient $2$.
\end{proof}

Sawin, Shusterman and Stoll developed this method for the polynomials $F=q^NU(q^{-1})+V(q)$ with fixed $U,V\in\Z[q]$ \cite[Section 4]{SSS}.
They write $G$ in the same form and compare the terms of low degree.
For the pairs $(U,V)$ which they call robust, they show that $\pm F$ and $\pm F^\vee$ are the only solutions of $GG^\vee=FF^\vee$ for all sufficiently large $N$ \cite[Lemma 4.5]{SSS}.
The polynomial $F=q^NJ-[a]_q$ has this form with $N+a$ in place of $N$ and $(U,V)=(J^\vee,-[a]_q)$, and Lemmas~\ref{lem:blocks} and~\ref{lem:divisors} follow the same pattern.
Lemma~\ref{lem:divisors} plays the role of robustness.
The counting argument of Lemma~\ref{lem:blocks} uses that the coefficients of $F$ are $0$ and $\pm1$, and it gives the explicit bound $N\ge5a-2$, which is linear in $a$.
For $0,1$-polynomials $f(q)q^N+g(q)$, bounds linear in the degrees are due to Filaseta and Matthews \cite{FM04}.

\restate{theorem}{thm:E}
\begin{proof}
Since $(\Z/a\Z)^\times=\{\pm1\}$, we have $n\equiv\pm1\pmod a$.
Therefore $n=a(N+1)-1$ with $J=J_a^+$, or $n=aN+1$ with $J=J_a^-$, for some $N\ge1$.
Since $n>5a^2$, in both cases $N\ge5a$.
Put $F=q^NJ-[a]_q$.
By Lemma~\ref{lem:identity}, $F=(q-1)\Rq_{n/a}$.
Since $n\neq a+1$ and $n\neq a^2-1$, Lemma~\ref{lem:cyclotomic} applies, and we can write $F=\pm(q-1)C_{a,n}B$, where $B\in\Z[q]$ has no root of unity as a root.

We show that $B$ is irreducible.
Since $\deg C_{a,n}\le\sum_{d\in D_a}\varphi(d)\le(a-2)+a$, where $\varphi$ is Euler's function, we have $\deg B\ge(N+a-1)-(2a-2)=N-a+1>0$.
Assume that $B=gh$ with $g$ irreducible and $h$ not constant.
Put $G=(q-1)C_{a,n}\,g\,h^\vee$.
Then $G\in\Z[q]$, $\deg G=\deg F$, $G(0)\neq0$ and $GG^\vee=FF^\vee$.
Let $A$ and $H$ be as in Lemma~\ref{lem:blocks}.
Since $A$ divides $[a]_qJ^\vee$, Lemma~\ref{lem:divisors} yields $A=\varepsilon[a]_q$ or $A=\varepsilon J^\vee$ with $\varepsilon\in\{\pm1\}$.
In the first case $\widetilde H=-\varepsilon J^\vee$, that is, $H=-\varepsilon q^{a-1}J$, and $G=-\varepsilon F$.
In the second case $\widetilde H=-\varepsilon[a]_q$, that is, $H=-\varepsilon q^a[a]_q$, and $G=\varepsilon F^\vee$, since $F^\vee=J^\vee-q^{N+1}[a]_q$.
If $G=\pm F$, then $h^\vee=\pm h$.
For an irreducible factor $u$ of $h$ the polynomial $u^\vee$ divides $h$ as well.
Thus $u$ divides $F$ and $F^\vee$, and therefore $u$ is cyclotomic by Lemma~\ref{lem:cyclotomic}~(2).
This contradicts the choice of $B$.
If $G=\pm F^\vee$, then $(q-1)C_{a,n}\,g\,h^\vee=\pm(q-1)^\vee C_{a,n}^\vee g^\vee h^\vee$.
Since $(q-1)^\vee=-(q-1)$ and $C_{a,n}^\vee=C_{a,n}$, we obtain $g^\vee=\pm g$.
Then $g$ divides $F$ and $F^\vee$, and $g$ is cyclotomic, which is again a contradiction.
Therefore $B$ is irreducible.

Put $E_{a,n}=\pm B^\vee$, where the sign is determined by $E_{a,n}(0)=1$.
By Lemma~\ref{lem:identity} and $C_{a,n}^\vee=C_{a,n}$ we obtain $\Sq_{a/n}=C_{a,n}E_{a,n}$.
The polynomial $E_{a,n}$ is irreducible, and its roots are the inverses of the roots of $B$.
None of them is a root of unity.
Since $\Sq_{a/n}(1)=n$, we have $E_{a,n}(1)=n/C_{a,n}(1)$.
The value $C_{a,n}(1)$ is the product of the primes $p$ over the prime powers $p^k\in D_a$ which divide $n$.
It divides $\prod_pp^{\max\{v_p(a-1),v_p(a+1)\}}$, where $v_p$ is the $p$-adic valuation, and this product divides $a^2-1$.
Since $n>a^2-1$, it follows that $E_{a,n}(1)>1$, and thus $E_{a,n}(1)\ge2$.
\end{proof}

\begin{remark}\label{rem:small}
The proof of Theorem~\ref{body:thm:E} works whenever $\lfloor n/a\rfloor\ge5a-2$, the hypothesis of Lemma~\ref{lem:blocks}.
The conclusion of Theorem~\ref{body:thm:E} does not hold for $n\in\{1,a-1,a+1,a^2-1\}$, since $\Sq_{a/1}=1$, $\Sq_{a/(a-1)}=[a-1]_q$, $\Sq_{a/(a+1)}=[a+1]_q$ and $\Sq_{a/(a^2-1)}=[a-1]_q[a+1]_q$ are products of cyclotomic polynomials (see also \cite[Appendix A.1]{EVW26}).
The first two follow from $\Sq_{r/s+1}=\Sq_{r/s}$ and $\Sq_{1/m}=[m]_q$, the third from \cite[Proposition 3.2]{KMRWY25}, and the last from Lemma~\ref{lem:identity} with $N=a-1$ and the identity $q^{a-1}J_a^+-[a]_q=[a-1]_q(q^{a+1}-1)$.
\end{remark}

\restate{corollary}{cor:F}
\begin{proof}
If $r\equiv a\pmod n$, then $\Sq_{r/n}=\Sq_{a/n}$, and by \cite[Theorem 3.5]{KMRWY25} the same holds if $ar\equiv-1\pmod n$.
By \cite[Propositions 3.2 and 3.4]{KMRWY25} we have $\Sq_{r/n}=\Sq_{a/n}^\vee$ if $r\equiv-a$ or $ar\equiv1\pmod n$.
Since the operation ${}^\vee$ preserves irreducibility, inverts the roots, and fixes $C_{a,n}$ and the value at $1$, the first assertion follows from Theorem~\ref{body:thm:E}.

Let $b\in\{2,3,4,6\}$, let $p>5b^2$ be a prime, and let $r\equiv\pm b$ or $br\equiv\pm1\pmod p$.
Then $p$ is prime to $b$.
Moreover $C_{b,p}=1$, since every element of $D_b$ is at most $b+1<p$.
The first assertion, applied to $a=b$ and $n=p$, shows that $\Sq_{r/p}$ is irreducible.
\end{proof}

\begin{remark}\label{rem:primes}
Since $5b^2\le180$ for $b\in\{2,3,4,6\}$, Corollary~\ref{body:cor:F} and the computer check of Conjecture~\ref{conj:KMRWY} for $p\le739$ reported after \cite[Conjecture 7.9]{KMRWY25} show that Conjecture~\ref{conj:KMRWY} holds for every prime $p$ and every $r$ prime to $p$ with $r\equiv\pm b$ or $br\equiv\pm1\pmod p$ for some $b\in\{2,3,4,6\}$.
\end{remark}

\subsection{Further questions}\label{sec:remarks}

\begin{question}\label{q:C1}
Let $r/s$ be an irreducible fraction with $s>0$, and let $f\in\Z[q]$ be an irreducible factor of $\Sq_{r/s}(q)$ with positive leading coefficient.
If $f$ is not a cyclotomic polynomial, does $f(1)\ge2$ hold?
\end{question}

For a prime $s=p$ Question~\ref{q:C1} is equivalent to Conjecture~\ref{conj:KMRWY}.
Indeed, let $a$ be prime to $p$ with $a\not\equiv\pm1\pmod p$.
Then $\Sq_{a/p}$ has no cyclotomic factor by Corollary~\ref{body:cor:B} and $\Sq_{a/p}(1)=p$.
Since $\Sq_{a/p}$ has nonnegative coefficients and $\Sq_{a/p}(0)=1$, its irreducible factors with positive leading coefficients are positive on $(0,\infty)$.
Their values at $q=1$ are positive integers whose product is $p$, and therefore all of them but one take the value $1$.
Thus $\Sq_{a/p}$ has the property in question if and only if it is irreducible.
For $a\equiv\pm1\pmod p$ we have $\Sq_{a/p}=[p]_q=\Phi_p(q)$, which is irreducible and has no non-cyclotomic factor, and both the property in question and the conclusion of Conjecture~\ref{conj:KMRWY} hold for $a/p$.
Corollary~\ref{body:cor:F} answers the question positively if $s>5b^2$ and $r\equiv\pm b$ or $br\equiv\pm1\pmod s$ for some $b\in\{2,3,4,6\}$.

\begin{remark}\label{rem:other}
Lemmas~\ref{lem:identity}, \ref{lem:cyclotomic} and~\ref{lem:blocks} are stated and proved for every $a\ge2$.
In the proof of Theorem~\ref{body:thm:E}, Lemma~\ref{lem:divisors} is only applied to the polynomial $A$ of Lemma~\ref{lem:blocks}, and both $A$ and $-[a]_qJ^\vee/A=\widetilde H$ have exactly $a$ nonzero coefficients $\pm1$.
Thus the conclusion of Theorem~\ref{body:thm:E} holds for $n\equiv\pm1\pmod a$ with $\lfloor n/a\rfloor\ge5a-2$ whenever every divisor $A$ of $[a]_qJ^\vee$ with this property is $\pm[a]_q$ or $\pm J^\vee$.
The conclusion of Lemma~\ref{lem:divisors} fails for $a=12$, since $q^{16}+q^{13}+q^{12}+q^{10}+q^9+q^8+q^7+q^6+q^5+q^4+q+1$ divides $[12]_qJ_{12}^-$.
We do not know whether the weaker property above holds for all $a\ge2$.
For $n=a(N+1)-b$ prime to $a$ with $2\le b\le a-2$ the computation in the proof of Lemma~\ref{lem:identity} gives $(q-1)\Rq_{n/a}=q^N(q\Rq_{a/b}+(1-q)\Sq_{a/b})-\Rq_{a/b}$.
Here $\Rq_{a/b}$ may have coefficients larger than $1$, as $\Rq_{5/2}=1+2q+q^2+q^3$, and Lemma~\ref{lem:blocks} does not apply.
\end{remark}

\section*{Acknowledgments}
This work was supported by JSPS KAKENHI Grant Number JP24K16885.

\end{document}